\newtheorem{theorem}{Theorem}[section]
\newtheorem{corollary}{Corollary}[section]
\newtheorem{proposition}{Proposition}[section]
\newtheorem{lemma}{Lemma}[section]
\numberwithin{equation}{section}
\theoremstyle{remark}
\newcommand{\cl}{\mathcal}
\newcommand{\bb}{\mathbb}
\newcommand{\ol}{\overline}
\newcommand{\wt}{\widetilde}
\newcommand{\ve}{\varepsilon}
\newcommand{\bs}{\boldsymbol}
\begin{document}

\title{Asymptotic Convergence of Solutions for One-Dimensional Keller-Segel Equations}

\author{Satoru Iwasaki, Koichi Osaki and Atsushi Yagi} 
\dedicatory{{\rm (Osaka University and Kwansei Gakuin University, Japan)}}

\maketitle

\begin{abstract}
The second and third authors of this paper have constructed in \cite{OY} finite-dimensional attractors for the one-dimensional Keller-Segel equations. They have also remarked in \cite[Section 7]{OY} that, when the sensitivity function is a linear function, the equations admit a global Lyapunov function. But at that moment they could not show the asymptotic convergence of solutions. This paper is then devoted to supplementing the results of \cite[Section 7]{OY} by showing that, as $t \to \infty$, every solution necessarily converges to a stationary solution by using the {\L}ojasiewicz-Simon gradient inequality of the Lyapunov function.
\par\smallskip

{\it Key Words and Phrases}.  Asymptotic convergence, Keller-Segel equations, Chemotaxis model.
\par\smallskip

2010 {\it Mathematics Subject Classification Numbers}\quad 35K45, 35B40, 92C17.
\end{abstract}

\section{Introduction}

We are concerned with the one-dimensional Keller-Segel equations
\begin{equation}  \label{1}
\left\{ \begin{aligned}
  &u_t = au_{xx} - k[u\rho_x]_x
       \hspace{1.5cm}\text{in}\quad I\times(0,\infty),   \\
  &\rho_t = b\rho_{xx} - d\rho + cu
       \hspace{1.5cm}\text{in}\quad I\times(0,\infty),   \\
  &u_x = \rho_x = 0
     \hspace{2.8cm}\text{on}\quad \partial I\times(0,\infty),
\end{aligned}  \right.
\end{equation}
together with the initial conditions
\begin{equation}  \label{2}
  u(x,0)=u_0(x) \quad\text{and}\quad \rho(x,0)=\rho_0(x)
  \qquad\text{in} \quad I,
\end{equation}
where $I = (\alpha,\beta)$ is a bounded open interval and $\partial I = \{\alpha,\beta\}$ is the set of its boundary points. Here, the unknown functions $u=u(x,t)$ and $\rho=\rho(x,t)$ denote the density of bacteria and the concentration of chemical substance, respectively, at position $x \in \ol I = [\alpha,\beta]$ and at time $t \in [0,\infty)$. The constants $a>0$ and $b>0$ are the diffusion coefficients of bacteria and chemical substance, respectively. The substance declines at a constant rate $d>0$ and is secreted at a constant rate $c>0$. The constant $k>0$ denotes the intensity of chemotaxis. (For a survey of the derivation of \eqref{1}, see, e.g.~\cite[Introduction]{Ya0}.)
\par

The second and third authors of this paper have constructed in \cite{OY} finite-dimensional attractors for the one-dimensional Keller-Segel equations of general form (including \eqref{1}-\eqref{2}). They have also remarked in \cite[Section 7]{OY} that, when the sensitivity function is a linear function as in \eqref{1}, the equations admit a global Lyapunov function. But at that moment they could not show the asymptotic convergence of solutions. The objective of this paper is then to supplement the results of \cite[Section 7]{OY} by showing that, as $t \to \infty$, the solution of \eqref{1}-\eqref{2} necessarily converges to a stationary solution of \eqref{1}.
\par

As well known (see, e.g.~\cite[Section 1]{PM}), even if a system of ordinary differential equations admits a Lyapunov function, the system can possess some bounded solution whose $\omega$-limit set is a continuum. Similarly, there exists a nonlinear diffusion system admitting a Lyapunov function but possesses some globally bounded solution whose $\omega$-limit set is a continuum. For the systems of ordinary differential equations, {\L}ojasiewicz has presented in \cite{Lo} a sufficient condition for the Lyapunov function which is now called the {\L}ojasiewicz gradient inequality in order that every bounded solution converges to a stationary solution. The gradient inequality has been extended into an infinite-dimensional version by Simon \cite{Si} which is now called the {\L}ojasiewicz-Simon gradient inequality. By checking that the Lyapunov function admitted by a diffusion system satisfies the gradient inequality, it is possible to claim that its globally bounded solution necessarily converges to a stationary solution. After the paper \cite{Si} being published, many authors tried to devise more convenient ways how to check the {\L}ojasiewicz-Simon gradient inequality in the framework of Functional Analysis, see Chill \cite{Ch}, Chill-Haraux-Jendoubi \cite{CHJ}, Haraux-Jendoubi \cite{HJ}, Jendoubi \cite{Je}, and so on. In the meantime, Feireisl-Issard{\bf -}Roch-Petzeltov\'{a} \cite{FIP} has devised a non-smooth version of the gradient inequality.
\par

Essentially, we can use the methods developed by \cite{Ch,CHJ,HJ,Je} for our equations \eqref{1}-\eqref{2}, too. However, for a direct application, it must hold true that the Lyapunov function $\Phi(u,\rho)$ given by \eqref{16} for $0 \le u \in L_2(I)$ and $\rho \in H^1(I)$ is twice continuously Fr\'echet differentiable, which we cannot verify unfortunately. By this reason, we will introduce a modified version in which we require that $\Phi(u,\rho)$ is only once continuously Fr\'echet differentiable and its derivative $\Phi'(u,\rho)$ is only G\^ateaux differentiable with a derivative of bounded linear operator and will recover these weaker differentiabilities of $\Phi(u,\rho)$ by using other techniques described in Section 6.
\par

An alternative way may be to use the methods for non-smooth Lyapunov functions devised by \cite{FIP}. In fact, using the methods, Feireisl-Lauren\c{c}ot-Petzeltov\'{a} have already proved in \cite{FLP} the asymptotic convergence for the classical solutions of \eqref{1}. But, as we constructed in \cite{OY} only strict solutions for \eqref{1}, we want to choose the other way just explained above. Not only the one-dimensional case, our techniques are equally available for the Keller-Segel equations in higher dimensional spaces (and also some other types of nonlinear parabolic equations \cite{AMY,IY}). The main difference of our result, Corollary \ref{C}, and \cite[Theorem 1.1]{FLP} is that our result gives an explicit convergence rate \eqref{30} at which the solution $(u(t),\rho(t))$ converges to a stationary solution $(\ol u,\ol\rho)$. A priori knowledge of such an order estimate of convergence is essentially meaningful in the stage of numerical computations for the limit solution $(\ol u,\ol\rho)$.
\par

Throughout the paper, $H^s(I)$ denotes the real Sobolev space in $I$ with exponent $s \ge 0$. As usual, $H^0(I) = L_2(I)$. The dual space of $H^s(I)$ is denoted by $H^s(I)'$. For $s > \frac32$, $H^s_N(I)$ denotes the closed subspace of $H^s(I)$ consisting of the functions $\rho \in H^s(I)$ satisfying the homogeneous Neumann conditions $\rho'(\alpha)=\rho'(\beta)=0$ at the boundary points of $I$.

\section{Reviews on Existence Results}

The existence of global solutions to \eqref{1}-\eqref{2} has already been obtained in \cite{OY}. In this section, let us briefly review the results and, at the same time, let us verify some properties of the global solutions which were not mentioned there but are necessary in this study.

\subsection{Global Solutions}

In order to construct the solution $(u,\rho)$ to \eqref{1}-\eqref{2}, we can apply the theory of semilinear abstract parabolic evolution equations (see \cite[Theorem 4.4]{Ya}). In fact, for any $(u_0,\rho_0) \in L_2(I) \times H^1(I)$ satisfying
\begin{equation} \label{6}
  u_0(x) \ge 0 \quad\text{for a.e. $x \in I$}\qquad\text{and}\qquad
  \rho_0(x) \ge 0 \quad\text{for $x \in I$},
\end{equation}
there exists a unique local solution lying in the function space:
\begin{equation*}
\left\{ \begin{aligned}
  0 \le u &\in \cl C([0,T_{(u_0,\rho_0)}]; L_2(I)) \cap
              \cl C((0,T_{(u_0,\rho_0)}]; H^2_N(I)) \cap
              \cl C^1((0,T_{(u_0,\rho_0)}]; L_2(I)),  \\
  0 \le \rho &\in \cl C([0,T_{(u_0,\rho_0)}]; H^1(I)) \cap
              \cl C((0,T_{(u_0,\rho_0)}]; H^3_N(I)) \cap
              \cl C^1((0,T_{(u_0,\rho_0)}]; H^1(I)).
\end{aligned} \right.
\end{equation*}
Here, the time $T_{(u_0,\rho_0)} > 0$ is determined by the magnitude of norms $\|u_0\|_{L_2}$ and $\|\rho_0\|_{H^1}$. In the meantime, as established by \cite[Proposition 4.1]{OY}, any local solution to \eqref{1}-\eqref{2} satisfies {\it a priori} estimates, which show that the norms $\|u(t)\|_{L_2}$ and $\|\rho(t)\|_{H^1}$ of any local solution are globally controlled by those of $u_0$ and $\rho_0$ alone. Hence, as stated in \cite[Theorem 4.3]{OY}, for any $(u_0,\rho_0) \in L_2(I) \times H^1(I)$ satisfying \eqref{6}, \eqref{1}-\eqref{2} possesses a unique global solution in the function space:
\begin{equation*}
\left\{ \begin{aligned}
  0 \le u &\in \cl C([0,\infty); L_2(I)) \cap
              \cl C((0,\infty); H^2_N(I)) \cap
              \cl C^1((0,\infty); L_2(I)),  \\
  0 \le \rho &\in \cl C([0,\infty); H^1(I)) \cap
              \cl C((0,\infty); H^3_N(I)) \cap
              \cl C^1((0,\infty); H^1(I)).
\end{aligned} \right.
\end{equation*}
\par

Moreover, if $(u_0,\rho_0)$ of \eqref{2} is taken as
\begin{equation} \label{6'}
  u_0 \in H^2_N(I) \quad\text{and}\quad  \rho_0 \in H^3_N(I),
\end{equation}
together with \eqref{6}, then the global solution $(u,\rho)$ belongs to
\begin{equation}  \label{10}
  u \in \cl C([0,\infty); H^2_N(I))
    \quad\text{and}\quad \rho \in \cl C([0,\infty); H^3_N(I)),
\end{equation}
respectively, and satisfies a global norm estimate
\begin{equation}  \label{11}
  \|u(t)\|_{H^2} + \|\rho(t)\|_{H^3} \le C_{(u_0,\rho_0)},
  \qquad 0 < t < \infty,
\end{equation}
with some constant $C_{(u_0,\rho_0)}$ depending only on $\|u_0\|_{H^2}$ and $\|\rho_0\|_{H^3}$. For the proof, see \cite[Theorem 4.4]{OY}.
\par

\subsection{Strict Positivity of $u(t)$}

Let us next verify strict positivity of $u(t)$ which is very important in constructing a Lyapunov function. Let $(u_0,\rho_0)$ of \eqref{2} be taken as \eqref{6'} together with
\begin{equation} \label{9}
  u_0(x) > 0 \quad\text{for $x \in \ol I$}\quad\text{and}\quad
  \rho_0(x) \ge 0 \quad\text{for $x \in \ol I$}.
\end{equation}

\begin{proposition}  \label{P1}
Under \eqref{6'}, if $(u_0,\rho_0)$ satisfies \eqref{9}, then the solution $(u,\rho)$ of \eqref{1}-\eqref{2} also satisfies the same at every time $0 < t < \infty$.
\end{proposition}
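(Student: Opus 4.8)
The plan is to read the first equation in \eqref{1} as a \emph{linear} scalar parabolic equation for $u$ in which $\rho$ enters only through given coefficients, and then to invoke the strong maximum principle; the second equation will follow as an easier consequence. First I would record the regularity of the coefficients: by \eqref{10}, $\rho \in \cl C([0,\infty); H^3_N(I))$, and since $H^3(I) \hookrightarrow \cl C^2(\ol I)$ in one space dimension, the functions $\rho_x$ and $\rho_{xx}$ are continuous on $\ol I \times [0,\infty)$, hence bounded on each strip $\ol I \times [0,T]$. Expanding the divergence term, $u$ solves
\begin{equation*}
  u_t = a u_{xx} - k\rho_x\,u_x - (k\rho_{xx})\,u
  \qquad\text{in }\ I\times(0,T],
\end{equation*}
with $u_x = 0$ on $\partial I\times(0,T]$ and $u(\cdot,0)=u_0$, where the drift $-k\rho_x$ and the potential $-k\rho_{xx}$ are bounded and continuous.

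To remove the indefinite sign of the zeroth-order term, set $w = e^{-\lambda t}u$ with $\lambda > \|k\rho_{xx}\|_{\cl C(\ol I\times[0,T])}$; then
\begin{equation*}
  w_t = a w_{xx} - k\rho_x\,w_x - (k\rho_{xx}+\lambda)\,w,
  \qquad k\rho_{xx}+\lambda \ge 0,
\end{equation*}
still with $w_x=0$ on $\partial I$ and $w(\cdot,0)=u_0>0$ on $\ol I$, so that $\min_{\ol I} w(\cdot,0)>0$ by compactness. I would then apply the parabolic strong maximum principle together with Hopf's boundary-point lemma: if $w$ first attained a nonpositive value at some time $t^*>0$ at a point $x^*\in\ol I$, then for $x^*\in I$ the strong maximum principle would force $w$ to be constant (hence $\le 0$) on $\ol I\times[0,t^*]$, contradicting $w(\cdot,0)>0$, while for $x^*\in\partial I$ Hopf's lemma would give $w_x(x^*,t^*)\neq 0$, contradicting the Neumann condition. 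Hence $w>0$, i.e. $u(x,t)>0$ for all $x\in\ol I$ and $0<t<\infty$. An equivalent and perhaps cleaner route is to extend $u$ and $\rho$ evenly across $\alpha$ and $\beta$ (the Neumann condition and the structure of \eqref{1} make the even extension of $u$ solve the same equation on a larger interval, with $\rho_x$ extended oddly and $\rho_{xx}$ evenly), thereby eliminating the boundary and reducing everything to the interior strong maximum principle.

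With $u\ge 0$ now available, the second equation in \eqref{1} reads $\rho_t - b\rho_{xx} + d\rho = cu \ge 0$ with $\rho_x=0$ on $\partial I$ and $\rho(\cdot,0)=\rho_0\ge 0$; since $d>0$, the maximum principle immediately yields $\rho(x,t)\ge 0$ for all $x\in\ol I$ and $t>0$ (in fact $\rho$ becomes strictly positive for $t>0$ once $u$ is, but only $\rho\ge 0$ is asserted). The main obstacle is the boundary: the coefficients are merely continuous, not H\"older, so rather than quoting a Schauder-class maximum principle I would either invoke a version valid for the strong ($W^{2,1}_p$-type) solutions furnished by the abstract parabolic theory of Section~2, or use the even-reflection device described above; the loss of time-regularity at $t=0$ is harmless, since positivity is claimed only for $t>0$ while $u$ and $\rho$ are continuous down to $t=0$, so the maximum principle may be applied on each slab $\ol I\times[0,t_0]$, $t_0>0$.
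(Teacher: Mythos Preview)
Your argument is correct, and it follows a genuinely different route from the paper's own proof. Both proofs begin by freezing $\rho$ and regarding $u$ as a solution of the linear equation $u_t = au_{xx} - k\rho_x u_x - k\rho_{xx} u$ with bounded continuous coefficients. From there the paper proceeds by an energy method: it introduces the explicit subsolution $\delta_0 e^{-C_\tau t}$ with $\delta_0 = \min_{\ol I} u_0$ and $C_\tau = k\|\rho_{xx}\|_{\cl C(\ol I\times[0,\tau])}$, sets $\phi(t) = \int_I H(u-\delta_0 e^{-C_\tau t})\,dx$ with $H(\xi)=\tfrac12(\xi_-)^2$, and shows by integration by parts that $\phi'(t)\le D_\tau\phi(t)$, whence $\phi\equiv 0$ and $u(x,t)\ge \delta_0 e^{-C_\tau t}$. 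Your approach instead shifts the zeroth-order term by the exponential weight $w=e^{-\lambda t}u$ and invokes the strong maximum principle together with Hopf's lemma (or the equivalent even-reflection trick) to rule out interior and boundary zeros.

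The trade-off is this: the paper's energy argument is entirely self-contained within the $H^2$--$L_2$ framework already in place and delivers an explicit quantitative lower bound, which is later useful (cf.\ the constant $\delta$ in \eqref{34}); your approach is shorter and more conceptual, but leans on pointwise maximum-principle machinery whose hypotheses (Hopf's lemma in particular) must be matched against the available regularity $u\in\cl C([0,\infty);H^2_N(I))\cap\cl C^1((0,\infty);L_2(I))$. You correctly flag this and propose the $W^{2,1}_p$-version or the reflection device; either works in one space dimension. Note also that the paper takes $\rho\ge 0$ as already established in the existence theory, so your separate treatment of $\rho$ is redundant but harmless.
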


\begin{proof}
As $\rho(t) \ge 0$ is already known, it suffices to estimate $u(t)$ from below. Put $\delta_0 = \min_{x \in \ol I} u_0(x) > 0$. And put $C_\tau = k\max_{(x,t) \in \ol I \times [0,\tau]} |\rho_{xx}(x,t)|$ for arbitrarily fixed $0 < \tau < \infty$. We regard $u$ as a solution to the linear diffusion equation
\begin{equation*}
  u_t = au_{xx} - p(x,t)u_x - q(x,t)u
  \qquad  \text{in}\quad  I \times (0,\tau),
\end{equation*}
where $p(x,t)=k\rho_x(x,t)$ and $q(x,t)=k\rho_{xx}(x,t)$.
\par

Using a cutoff function $H(\xi)$ such that $H(\xi) \equiv \frac12\xi^2$ for $-\infty < \xi < 0$ and $H(\xi) \equiv 0$ for $0 \le \xi < \infty$, consider the function
\begin{equation*}
  \phi(t) = \int_I H(u(x,t)-\delta_0 e^{-C_\tau t})dx,
  \qquad  0 \le t \le \tau.
\end{equation*}
Then,
\begin{align*}
  \frac{d\phi}{dt}(t) &= \int_I H'(u-\delta_0 e^{-C_\tau t})
          (u_t+\delta_0 C_\tau e^{-C_\tau t})dx   \\
      &= \int_I H'(u-\delta_0 e^{-C_\tau t})
          [au_{xx}-pu_x-qu+\delta_0 C_\tau e^{-C_\tau t}]dx.
\end{align*}
Here,
\begin{align*}
  a\int_I H'(u-\delta_0 e^{-C_\tau t})u_{xx}dx
  &= -a \int_I [H'(u-\delta_0 e^{-C_\tau t})]_x u_xdx  \\
  &= -a \int_I H''(u-\delta_0 e^{-C_\tau t})u_x^2dx \le 0.
\end{align*}
Meanwhile, noting that $p(\alpha,t)=p(\beta,t)=0$, we have
\begin{align*}
  -\int_I H'(u&-\delta_0 e^{-C_\tau t})pu_xdx
      = -\int_I H'(u-\delta_0 e^{-C_\tau t})p[u-\delta_0 e^{-C_\tau t}]_xdx \\
     &= \int_I [H''(u-\delta_0 e^{-C_\tau t})u_xp
            + H'(u-\delta_0 e^{-C_\tau t})p_x][u-\delta_0 e^{-C_\tau t}]dx.
\end{align*}
Furthermore, since $H''(\xi) \ge 0$ and $H'(\xi)\xi \ge 0$, it follows that
\begin{multline*}
  \int_I H''(u-\delta_0 e^{-C_\tau t})u_xp[u-\delta_0 e^{-C_\tau t}]dx  \\
   \le \frac a2 \int_I H''(u-\delta_0 e^{-C_\tau t})u_x^2dx
  + \frac1{2a}\int_I H''(u-\delta_0 e^{-C_\tau t})
      p^2[u-\delta_0 e^{-C_\tau t}]^2dx,
\end{multline*}
and that
\begin{equation*}
  \int_I H'(u-\delta_0 e^{-C_\tau t})p_x [u-\delta_0 e^{-C_\tau t}]dx
  \le \int_I H'(u-\delta_0 e^{-C_\tau t})|p_x|[u-\delta_0 e^{-C_\tau t}]dx.
\end{equation*}
Therefore, the relations $H''(\xi)\xi^2 = H'(\xi)\xi = 2H(\xi)$ give
\begin{equation*}
  -\int_I H'(u-\delta_0 e^{-C_\tau t})pu_xdx
  \le \frac a2 \int_I H''(u-\delta_0 e^{-C_\tau t})u_x^2dx 
          + D_\tau \varphi(t)
\end{equation*}
with some constant $D_\tau > 0$. Finally, since $u \ge 0$ and $-q+C_\tau \ge 0$, it is clear that
\begin{multline*}
  \int_I H'(u-\delta_0 e^{-C_\tau t})[-qu+\delta_0 C_\tau e^{-C_\tau t}]dx  \\
      = \int_I H'(u-\delta_0 e^{-C_\tau t})(-q+C_\tau)u\,dx
       - C_\tau \int_I H'(u-\delta_0 e^{-C_\tau t})[u-\delta_0 e^{-C_\tau t}]dx
      \le 0.
\end{multline*}
\par

We have thus shown that $\frac{d\phi}{dt}(t) \le D_\tau \phi(t)$ for any $0 \le t \le \tau$. Consequently, $\phi(t) \le \phi(0)e^{D_\tau t}$. Since $\phi(0)=0$, it follows that $\phi(t)$ vanishes identically on the interval $[0,\tau]$. In other words, $u(x,t) \ge \delta_0 e^{-C_\tau t}$ for $(x,t) \in \ol I \times [0,\tau]$. Since $\tau > 0$ was fixed arbitrarily, the desired strict positivity of $u(t)$ is obtained.
\end{proof}

\subsection{Lyapunov Function}

Let us further remember that the equation \eqref{1} admits a Lyapunov function. In fact, under \eqref{6'} and \eqref{9}, let $(u,\rho)$ denote the global solution of \eqref{1}-\eqref{2}. Of course, $(u,\rho)$ belongs to the function space \eqref{10} and satisfies the positivity \eqref{9} for $0 < t < \infty$. According to \cite[(7.2)]{OY}, it holds true for $(u,\rho)$ that
\begin{multline} \label{13}
   \frac d{dt} \int_I \left\{ac[u\log u-u]
      + \frac{bk}2\rho_x^2 + \frac{dk}2\rho^2 - cku\rho\right\}dx  \\
   = - c\int_I u\left\{[a\log u-k\rho]_x\right\}^2dx
     - k\int_I \rho_t^2dx \le 0,
   \qquad  0 \le t < \infty.
\end{multline}
This means that the function
\begin{equation}  \label{16}
  \Phi(u,\rho)
  = \int_I \left\{ac[u\log u- u]
      + \frac{bk}2(\rho')^2 + \frac{dk}2\rho^2 - cku\rho\right\}dx
\end{equation}
defined for $0 \le u \in L_2(I)$ and $\rho \in H^1(I)$ becomes a Lyapunov function for the solution $(u,\rho)$. (Setting $u\log u=0$ for $u=0$, too, let us consider $u\log u$ to be a continuous function for $u \in [0,\infty)$.)
\par

In addition, we notice the following fact.

\begin{proposition}  \label{P2}
If $\frac d{dt}\Phi(u(t),\rho(t)) = 0$ at some time $t=\ol t$, then $(u(\ol t),\rho(\ol t))$ is a stationary solution of \eqref{1}.
\end{proposition}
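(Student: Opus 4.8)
The plan is to read off the claim directly from the energy identity \eqref{13}. By hypothesis the left-hand side of \eqref{13} vanishes at $t=\ol t$, hence so does the right-hand side, which is a sum of two non-positive terms. Since each term is separately $\le 0$, both must vanish at $t=\ol t$:
\begin{equation*}
  \int_I u(\ol t)\left\{[a\log u(\ol t)-k\rho(\ol t)]_x\right\}^2dx = 0
  \qquad\text{and}\qquad
  \int_I \rho_t(\ol t)^2\,dx = 0.
\end{equation*}

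From the second equality we immediately get $\rho_t(x,\ol t)=0$ for all $x\in\ol I$, which, inserted into the second equation of \eqref{1}, shows that $(u(\ol t),\rho(\ol t))$ satisfies the stationary version $b\rho'' - d\rho + cu = 0$ of that equation, together with the Neumann boundary condition inherited from \eqref{10}. It remains to show that the first equation of \eqref{1} holds in stationary form, i.e. $au_{xx} - k[u\rho_x]_x = 0$. For this I would use the first vanishing integral: since $u(x,\ol t) \ge \delta_0 e^{-C_{\ol t}\,\ol t} > 0$ on $\ol I$ by Proposition \ref{P1}, the integrand forces $[a\log u(\ol t) - k\rho(\ol t)]_x \equiv 0$ on $I$; in other words $a\frac{u_x(\ol t)}{u(\ol t)} - k\rho_x(\ol t) \equiv 0$, equivalently $a u_x(\ol t) - k u(\ol t)\rho_x(\ol t) \equiv 0$ on $I$. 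Differentiating this identity in $x$ (which is legitimate because $u(\ol t)\in H^2_N(I)$ and $\rho(\ol t)\in H^3_N(I)$ by \eqref{10}, so the expression is in $H^1(I)$ and its distributional derivative is the classical one) gives exactly $a u_{xx}(\ol t) - k[u(\ol t)\rho_x(\ol t)]_x = 0$, i.e. $u_t(x,\ol t)=0$. Moreover, at the endpoints $\rho_x(\ol t)=0$ from \eqref{10}, so $a u_x(\ol t) - k u(\ol t)\rho_x(\ol t)=0$ there as well, which combined with strict positivity of $u$ gives $u_x(\alpha,\ol t)=u_x(\beta,\ol t)=0$, confirming the Neumann boundary condition for $u$.

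Collecting these facts, $(u(\ol t),\rho(\ol t))$ satisfies both stationary equations of \eqref{1} together with the homogeneous Neumann boundary conditions, hence it is a stationary solution.

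I do not expect a serious obstacle here; the statement is essentially a direct consequence of the structure of the dissipation term in \eqref{13}. The only point requiring a little care is the passage from the first-order identity $a u_x - k u\rho_x \equiv 0$ to the second-order stationary equation, which must be justified by the regularity \eqref{10} of the solution at time $\ol t$; the strict positivity of $u(\ol t)$ provided by Proposition \ref{P1} is what makes the first dissipation integral usable to conclude $[a\log u - k\rho]_x\equiv 0$ rather than merely "$u$ or the derivative vanishes" pointwise.
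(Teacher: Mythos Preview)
Your proof is correct and follows essentially the same route as the paper: both arguments use the strict positivity of $u(\ol t)$ (Proposition~\ref{P1}) together with the energy identity \eqref{13} to force $[a\log u(\ol t)-k\rho(\ol t)]_x\equiv 0$ and $\rho_t(\ol t)\equiv 0$, and then rewrite the first of these as $a u_x - k u\rho_x\equiv 0$ before differentiating. The paper phrases this last step slightly more compactly as $[a\ol u_x-k\ol u\,\ol\rho_x]_x=\{\ol u[a\log\ol u-k\ol\rho]_x\}_x=0$, but the content is identical; your additional remarks on regularity and boundary conditions are correct elaborations.
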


\begin{proof}
Assume that $\left[\frac d{dt}\Phi(u(t),\rho(t))\right]_{t=\ol t} = 0$ and put $\ol u=u(\ol t)$ and $\ol\rho=\rho(\ol t)$. Then, because of the strict positivity of $\ol u$ due to Proposition \ref{P1}, it must follow from \eqref{13} that
\begin{equation}  \label{24}
  [a\log\ol u-k\ol\rho]_x = 0            \qquad\text{and}\qquad
  b\ol\rho_{xx} - d\ol\rho + c\ol u = 0  \qquad\text{in}\enskip I.
\end{equation}
The first equality yields further
\begin{equation*}
  \left[a\ol u_x-k\ol u\,\ol\rho_x\right]_x
  = \left\{\ol u[a\log\ol u-k\ol\rho]_x\right\}_x = 0
    \qquad\text{in}\enskip I.
\end{equation*}
Hence, $(\ol u,\ol\rho)$ is a stationary solution to \eqref{1}.
\end{proof}

\subsection{$\omega$-limit set}

Under \eqref{6'} and \eqref{9}, let $(u,\rho)$ be the global solution of \eqref{1}-\eqref{2}. As usual, its $\omega$-limit set is defined by
\begin{equation*}
  \omega(u,\rho) = \{(\ol u,\ol\rho) \in L_2(I) \times L_2(I);\;
            \exists t_n \nearrow \infty, \enskip u(t_n) \to \ol u \enskip
            \text{and}\enskip \rho(t_n) \to \rho \enskip\text{in}
            \enskip L_2(I)\}.
\end{equation*}
As $(u,\rho)$ satisfies the global estimate \eqref{11}, it is clear that $\omega(u,\rho)\not=\emptyset$. Furthermore, as any bounded, closed ball of $H^2(I)$ (resp. $H^3(I)$) is weakly sequentially closed in $H^2(I)$ (resp. $H^3(I)$), it follows that $\omega(u,\rho) \subset H^2_N(I) \times H^3_N(I)$. In addition, if $(u(t_n),\rho(t_n)) \to (\ol u,\ol\rho)$ in $L_2(I) \times L_2(I)$, then this sequence is convergent even in $H^{2\theta}(I)\times H^{3\theta}(I)$ for any exponent $0 < \theta < 1$; in particular, $(\ol u,\ol\rho)$ satisfies \eqref{6}. As an immediate consequence of these facts and Proposition \ref{P2}, we verify that
\begin{equation}  \label{13*}
  \lim_{t\to\infty} \Phi(u(t),\rho(t))
  = \inf_{0 \le t < \infty} \Phi(u(t),\rho(t))
  = \Phi(\ol u,\ol\rho)
    \qquad\text{for any}\enskip (\ol u,\ol\rho) \in \omega(u,\rho).
\end{equation}
\par

In the meantime, according to \cite[Theorem 7.1]{OY}, $\omega(u,\rho)$ must contain at least one stationary solution of \eqref{1}, say $(\ol u,\ol\rho)$. Then,  $\ol u$ is verified to be strictly positive on $\ol I$.

\begin{proposition}  \label{T2}
Let $(\ol u,\ol\rho) \in \omega(u,\rho)$ be a stationary solution of \eqref{1}. Then, $(\ol u,\ol\rho)$ must satisfy \eqref{9}.
\end{proposition}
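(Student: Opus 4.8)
The plan is to show that a stationary $\overline u$ in the $\omega$-limit set cannot vanish anywhere on $\overline I$, by combining the structure of the stationary equations \eqref{24} with the conservation of total mass for the $u$-component. First I would recall that any stationary solution $(\overline u,\overline\rho)\in\omega(u,\rho)$ lies in $H^2_N(I)\times H^3_N(I)$ and satisfies $\overline u\ge 0$, $\overline\rho\ge 0$ (this is already recorded in the discussion of the $\omega$-limit set). Moreover, integrating the first equation of \eqref{1} in $x$ and using the no-flux boundary conditions shows that $\int_I u(x,t)\,dx$ is independent of $t$; since $u_0>0$ on $\overline I$, this common value is a strictly positive constant $M>0$. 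Passing to the limit along $t_n\nearrow\infty$ (the convergence $u(t_n)\to\overline u$ holds in $H^{2\theta}(I)$, hence in $C(\overline I)$ for $\theta$ close to $1$), we get $\int_I \overline u\,dx = M > 0$, so $\overline u$ is not identically zero.

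Next I would exploit the first identity in \eqref{24}. The key observation is that $\overline\rho$ is a genuine $H^2$-function (in fact $H^3$), hence continuous and bounded on $\overline I$. If $\overline u$ were to vanish at some point, consider the open set $U=\{x\in I:\overline u(x)>0\}$, which is nonempty by the previous paragraph. On each connected component $(\gamma,\delta)$ of $U$ the relation $[a\log\overline u - k\overline\rho]_x = 0$ holds, so $a\log\overline u - k\overline\rho \equiv \text{const}$ there, i.e. $\overline u = C\,e^{(k/a)\overline\rho}$ on $(\gamma,\delta)$ with $C>0$. Since $\overline\rho$ is bounded on $\overline I$, this forces $\overline u$ to be bounded below by a positive constant on $(\gamma,\delta)$, including up to the endpoints $\gamma,\delta$. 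But $\overline u$ is continuous on $\overline I$, so at an endpoint $\gamma\in I$ of the component we would have simultaneously $\overline u(\gamma)=0$ (since $\gamma\notin U$) and $\overline u(\gamma)\ge C\,e^{(k/a)\min_{\overline I}\overline\rho}>0$, a contradiction. Hence $U$ has no boundary point inside $I$, which means $U$ is both open and closed in the connected set $I$, so $U=I$; that is, $\overline u>0$ on all of $I$. The same bound $\overline u = C\,e^{(k/a)\overline\rho}$ on $I$, together with continuity on $\overline I$, then gives $\overline u>0$ on $\overline I$ as well.

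Finally, for $\overline\rho$ I would argue that $\overline\rho>0$ on $\overline I$ using the second stationary equation $b\overline\rho_{xx} - d\overline\rho + c\overline u = 0$. Since $c\overline u>0$ on $\overline I$, the function $\overline\rho$ satisfies $b\overline\rho_{xx} - d\overline\rho = -c\overline u < 0$ with homogeneous Neumann data; by the strong maximum principle (or by writing $\overline\rho$ via the Green's function for $-b\,\partial_x^2 + d$ with Neumann conditions, whose kernel is strictly positive), $\overline\rho$ attains no interior or boundary minimum with value $\le 0$, hence $\overline\rho>0$ on $\overline I$. Therefore $(\overline u,\overline\rho)$ satisfies \eqref{9}.

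The main obstacle I anticipate is the passage from the pointwise relation $[a\log\overline u - k\overline\rho]_x=0$, which a priori only makes sense where $\overline u>0$, to a global conclusion; handling this cleanly requires the connectedness/boundary argument above rather than a naive "divide by $\overline u$" step, and care that the convergence $u(t_n)\to\overline u$ is strong enough (uniform on $\overline I$) to justify both the mass identity in the limit and the continuity of $\overline u$ up to the boundary.
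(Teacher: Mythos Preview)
Your argument is correct, but it proceeds differently from the paper on both components. For $\overline u$, the paper views the stationary first equation as a linear second-order ODE $a\overline u'' - k\overline\rho'\,\overline u' - k\overline\rho''\,\overline u = 0$ with continuous coefficients; if $\overline u(x_0)=0$ then nonnegativity (or the Neumann condition at an endpoint) forces $\overline u'(x_0)=0$, and ODE uniqueness gives $\overline u\equiv 0$, contradicting $\int_I \overline u = \|u_0\|_{L_1}>0$. Your route instead integrates once to $a\overline u' = k\overline u\,\overline\rho'$ and, on each component of $\{\overline u>0\}$, reads off the explicit representation $\overline u = C\,e^{(k/a)\overline\rho}$, then uses boundedness of $\overline\rho$ and connectedness of $I$. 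Both arguments are short; the paper's avoids any case analysis on components, while yours yields the structural formula $\overline u \propto e^{(k/a)\overline\rho}$ as a byproduct. One small point: your citation of \eqref{24} is slightly off, since \eqref{24} is derived in the proof of Proposition~\ref{P2} \emph{using} strict positivity of $u(\overline t)$; you should instead obtain $a\overline u' = k\overline u\,\overline\rho'$ directly from the stationary equation and the Neumann conditions (which you effectively do anyway on $U$).

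For $\overline\rho$, note that \eqref{9} only asks for $\overline\rho \ge 0$, not strict positivity. The paper dispatches this in one line: $\rho(\cdot,t)\ge 0$ for all $t$, hence the limit $\overline\rho\ge 0$. Your maximum-principle argument is valid and yields the stronger conclusion $\overline\rho>0$, but it is more than the proposition requires.
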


\begin{proof}
Let us regard $\ol u$ as a solution to the ordinary differential equation $a\ol u'' - p(x)\ol u' - q(x)\ol u = 0$ for $x \in I$, where $p(x)=k\ol\rho'(x)$ and $q(x)=k\ol\rho''(x)$. Because of $(\ol u,\ol\rho) \in H^2(I)\times H^3(I)$, $\ol u$ becomes a classical solution. So, if $\ol u(x_0)=0$ at some $x_0 \in I$, then $\ol u(x) \ge 0$ on $\ol I$ implies that $\ol u'(x_0)=0$; consequently, $\ol u$ must vanish identically in $I$. But this contradicts to the condition that $\int_I \ol u(x)dx=\|u_0\|_{L_1}>0$. Similarly, if $\ol u(x_0)=0$ at $x_0=\alpha$ or $\beta$, then, since the Neumann boundary conditions imply $\ol u'(x_0)=0$, the same contradiction takes place. Hence, $\ol u(x) > 0$ for $x \in \ol I$.
\par

Meanwhile, $\rho(x,t) \ge 0$ implies that $\ol\rho(x) \ge 0$ for $x \in \ol I$.
\end{proof}

\section{Formulation}

In what follows, we will arbitrarily fix an initial function $(u_0,\rho_0)$ satisfying \eqref{6'} and \eqref{9}. Let $(u,\rho)$ be the global solution of \eqref{1}-\eqref{2}. Let $(\ol u,\ol\rho)$ be an $\omega$-limit of $(u,\rho)$ which is a stationary solution of \eqref{1}. Then, our goal is to show that $\omega(u,\rho)$ is a singleton, i.e., $\omega(u,\rho)=\{(\ol u,\ol\rho)\}$, and consequently $(u(t),\rho(t))$ converges asymptotically to $(\ol u,\ol\rho)$.
\par

For establishing this convergence, we will make use of the methods and techniques of Functional Analysis developed by \cite{AMY,Ch,HJ,Je}. This section is then devoted to setting up their framework.

\subsection{Evolution Equation}

It follows from the equation of $u$ of \eqref{1} that $\frac d{dt}\int_I u(x,t)dx = 0$ for every $0 < t < \infty$; therefore, $\int_I u(x,t)dx \equiv \|u_0\|_{L_1}$. In view of this fact, we want to shift the unknown function $u(x,t)$ to $v(x,t) = u(x,t)-f$, where $f$ stands for the positive constant $\|u_0\|_{L_1}|I|^{-1}$. Then, $v$ satisfies $\int_I v(x,t)dx = 0$ together with the equation
\begin{equation}  \label{3}
\left\{\begin{aligned}
  &v_t = av_{xx} - k[(v+f)\rho_x]_x
       \hspace{1.5cm}\text{in}\quad I\times(0,\infty),   \\  
  &v_x = 0
       \hspace{4.8cm}\text{on}\quad \partial I\times(0,\infty),
\end{aligned}\right.
\end{equation}
\par

In view of this shifting, we are led to introduce the subspace
\begin{equation*}
  L_{2,m}(I) = \left\{v \in L_2(I);\; \int_I v(x)dx = 0\right\}
\end{equation*}
of $L_2(I)$ (consisting of zero-mean functions in $I$) and to formulate the equation \eqref{3} of $v$ in this subspace. Clearly, $L_{2,m}(I)$ is an orthogonal compliment of the $1$-dimensional subspace of constant functions in $I$ and its orthogonal projection $P_m$ is given by
\begin{equation} \label{25}
\left\{ \begin{aligned}
  P_mu &= u - \frac1{|I|}\int_I u(x)dx, \qquad u \in L_2(I),  \\
  Q_mu &= (1-P_m)u = \frac1{|I|}\int_I u(x)dx,
          \qquad  u \in L_2(I).
\end{aligned} \right.
\end{equation}
\par

The realization of $-a\frac{d^2}{dx^2}$ in $L_{2,m}(I)$ under the homogeneous Neumann boundary conditions is defined as follows. Consider a bilinear form
\begin{equation*}
  a_1(v,w) = a\int_I v'(x)w'(x)dx,
  \qquad v,\, w \in H^1_m(I),
\end{equation*}
on $H^1_m(I) = \{v \in H^1(I);\; Q_mv = 0\}$. Since $H^1_m(I)$ can be equipped with an inner product
\begin{equation}  \label{7}
  (v,w)_{H^1_m} = a_1(v,w), \qquad  v,\, w \in H^1_m(I),
\end{equation}
the form $a_1(v,w)$ is trivially coercive. Therefore, by the relation
\begin{equation*}
  a_1(v,w)=\left<\cl A_1v,w\right>_{H^1_m{}'\times H^1_m},
  \qquad  v,\, w \in H^1_m(I),
\end{equation*}
an isomorphism $\cl A_1$ from $H^1_m(I)$ onto $H^1_m(I)'$ is determined (see Dautray-Lions \cite[Chapter VII]{DL}), where $H^1_m(I)'$ is the adjoint space of $H^1_m(I)$ such that $H^1_m(I) \subset L_{2,m}(I) \subset H^1_m(I)'$ makes a triplet of spaces. On the other hand, according to \cite[Theorem 6.1]{Ya1}, $\cl A_1$ is a real sectorial operator of $H^1_m(I)'$. As usual, $\cl A_1$ is regarded as a realization of $-a\frac{d^2}{dx^2}$ in $H^1_m(I)'$ under the homogeneous Neumann boundary conditions and its part $A_1$ in $L_{2,m}(I)$ is regarded as a realization of the same in $L_{2,m}(I)$. 
\par

Meanwhile, by virtue of the lemma below, we observe that $-k[(v+f)\rho']' \in H^1_m(I)'$ if $v \in H^1_m(I)$ and $\rho \in H^2_N(I)$. Hence, if we set a nonlinear operator $\chi(v,\rho)=-k[(v+f)\rho']'$ acting from $H^1_m(I) \times H^2_N(I)$ into $H^1_m(I)'$, then the equation \eqref{3} can be written as $\frac{dv}{dt} + \cl A_1v = \chi(v,\rho)$ in $H^1_m(I)'$.

\begin{lemma}  \label{L1}
If $\eta \in H^1(I)$ satisfies $\eta(\alpha)=\eta(\beta)=0$, then $\eta' \in L_{2,m}(I)$ and it holds that $\|\eta'\|_{{H^1_m}'} \le (1/\sqrt{a})\|\eta\|_{L_2}$.
\end{lemma}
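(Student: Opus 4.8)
The plan is to prove the two assertions of Lemma \ref{L1} separately, starting with the membership $\eta' \in L_{2,m}(I)$. Since $\eta \in H^1(I)$, the derivative $\eta'$ lies in $L_2(I)$, and the zero-mean condition follows at once from the fundamental theorem of calculus: $\int_I \eta'(x)\,dx = \eta(\beta)-\eta(\alpha) = 0$ by hypothesis. Hence $Q_m\eta' = 0$, that is, $\eta' \in L_{2,m}(I)$, and in particular $\eta'$ defines an element of $H^1_m(I)'$ via the triplet $H^1_m(I) \subset L_{2,m}(I) \subset H^1_m(I)'$.

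For the norm estimate, I would compute the $H^1_m(I)'$-norm of $\eta'$ by duality. By definition, $\|\eta'\|_{{H^1_m}'} = \sup\{ |(\eta',w)_{L_2}| ;\ w \in H^1_m(I),\ \|w\|_{H^1_m} \le 1 \}$, where the pairing of $\eta' \in L_{2,m}(I)$ against $w \in H^1_m(I)$ is realized by the $L_2$ inner product. For such a $w$, integrate by parts: since $\eta(\alpha)=\eta(\beta)=0$, the boundary terms vanish and $(\eta',w)_{L_2} = \int_I \eta'(x)w(x)\,dx = -\int_I \eta(x)w'(x)\,dx$. Applying the Cauchy-Schwarz inequality gives $|(\eta',w)_{L_2}| \le \|\eta\|_{L_2}\|w'\|_{L_2}$. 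Now recall from \eqref{7} that the inner product on $H^1_m(I)$ is $(v,w)_{H^1_m} = a_1(v,w) = a\int_I v'w'\,dx$, so $\|w\|_{H^1_m}^2 = a\|w'\|_{L_2}^2$, i.e. $\|w'\|_{L_2} = a^{-1/2}\|w\|_{H^1_m} \le a^{-1/2}$. Combining, $|(\eta',w)_{L_2}| \le a^{-1/2}\|\eta\|_{L_2}$ for every admissible $w$, and taking the supremum yields $\|\eta'\|_{{H^1_m}'} \le (1/\sqrt a)\|\eta\|_{L_2}$, as claimed.

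There is no serious obstacle here; the proof is a short duality argument. The only point requiring a little care is the identification of the pairing used in the definition of the $H^1_m(I)'$-norm with the $L_2$ inner product — this is exactly the content of the triplet structure recorded in the text — together with the correct normalization of the $H^1_m$-inner product from \eqref{7}, which is what produces the factor $1/\sqrt a$ rather than a bare constant. I would present the argument in that order: (i) note $\eta' \in L_2(I)$ and $\int_I \eta' = 0$, hence $\eta' \in L_{2,m}(I)$; (ii) for a test function $w \in H^1_m(I)$ integrate by parts and apply Cauchy-Schwarz; (iii) rewrite $\|w'\|_{L_2}$ in terms of $\|w\|_{H^1_m}$ via \eqref{7} and take the supremum over the unit ball of $H^1_m(I)$.
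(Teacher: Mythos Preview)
Your proof is correct and follows exactly the same approach as the paper: establish $\eta'\in L_{2,m}(I)$ from the boundary conditions, then integrate by parts against a test function $w\in H^1_m(I)$, apply Cauchy--Schwarz, and use the normalization \eqref{7} to extract the factor $1/\sqrt{a}$. The paper simply records this in a single line of inequalities rather than spelling out the supremum.
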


\begin{proof}
The boundary conditions imply that $\int_I \eta'(x)dx=0$, i.e., $\eta' \in L_{2,m}(I)$. Then,
\begin{equation*}
  |\left<\eta',v\right>_{H^1_m{}'\times H^1_m}|
  = |(\eta',v)_{L_{2,m}}| = |(\eta,v')_{L_2}|
  \le (1/\sqrt{a})\|\eta\|_{L_2}\|v\|_{H^1_m},
  \quad  v \in H^1_m(I),
\end{equation*}
due to \eqref{7}. Hence, the result is verified.
\end{proof}

As for the equation of $\rho$, we will formulate it in $L_2(I)$. Consider a bilinear form
\begin{equation*}
  a_2(\rho,\varphi) = b\int_I \rho'\varphi'\,dx + d\int_I \rho\varphi\,dx,
  \qquad \rho,\, \varphi \in H^1(I).
\end{equation*}
As $H^1(I)$ can be equipped with an inner product
\begin{equation} \label{4}
  (\rho,\varphi)_{H^1} = a_2(\rho,\varphi),
  \qquad  \rho,\,\varphi \in H^1(I),
\end{equation}
the form $a_2(\rho,\varphi)$ is trivially coercive. Then, by the same methods as for $a_1(v,w)$, $a_2(\rho,\varphi)$ determines an isomorphism $\cl A_2$ from $H^1(I)$ onto $H^1(I)'$ which is a realization of the differential operator $-b\frac{d^2}{dx^2}+d$ in $H^1(I)'$ under the homogeneous Neumann boundary conditions. Let $A_2$ be its part in $L_2(I)$ which can be characterized by the relation
\begin{equation*}
  a_2(\rho,\varphi) = (A_2\rho,\varphi)_{L_2},
  \qquad  \rho \in \cl D(A_2),\; \varphi \in L_2(I).
\end{equation*}
Then, $A_2$ is seen to be a positive definite self-adjoint operator of $L_2(I)$ with domain $\cl D(A_2) = H^2_N(I)$ whose square root $A_2^\frac12$ has the domain $\cl D(A_2^\frac12) = H^1(I)$ together with
\begin{equation} \label{23}
  a_2(\rho,\varphi) = (A_2^\frac12\rho,A_2^\frac12\varphi)_{L_2},
  \qquad  \rho,\,\varphi \in \cl D(A_2^\frac12).
\end{equation}
Consequently, the spaces $H^2_N(I) \subset H^1(I) \subset L_2(I)$ make a triplet. Hence, the equation of \eqref{1} for $\rho$ is written as $\frac{d\rho}{dt} + A_2\rho = c(v+f)$ in $L_2(I)$.
\par

We have thus arrived at the following formulation of \eqref{1}. Set an underlying space
\begin{equation}  \label{8}
  \frak X = \left\{\begin{pmatrix} w  \\  \varphi \end{pmatrix};\;
      w \in  H^1_m(I)' \enskip\text{and}\enskip
      \varphi \in L_2(I) \right\}.
\end{equation}
Let $A = \begin{pmatrix}  \cl A_1  &  0  \\  0  &  A_2 \end{pmatrix}$ be an operator matrix acting in $\frak X$ with the domain
\begin{equation}  \label{5}
  \cl D(A) = \left\{\begin{pmatrix}  v  \\  \rho \end{pmatrix};\;
      v \in \cl D(\cl A_1) = H^1_m(I)  \enskip\text{and}\enskip
      \rho \in \cl D(A_2) = H^2_N(I) \right\}.
\end{equation}
As seen, $A$ is a real sectorial operator of $\frak X$. Meanwhile, let $F(V)$ be a nonlinear operator of $\frak X$ defined by
\begin{equation*}
  F(V) = \begin{pmatrix} -k[(v+f)\rho']'  \\
             c(v+f)  \end{pmatrix},
  \qquad V = \begin{pmatrix} v \\ \rho \end{pmatrix} \in \cl D(A).
\end{equation*}
Then, the function $V(t) = \begin{pmatrix}v(t) \\ \rho(t)\end{pmatrix}$, where $v(t) = u(t)-f$, is considered to be a solution to the evolution equation
\begin{equation}  \label{39}
  \tfrac{dV}{dt} + AV = F(V),  \qquad 0 < t < \infty,
\end{equation}
in $\frak X$ satisfying the initial condition
\begin{equation} \label{40}
  V(0) = V_0 \equiv \begin{pmatrix} u_0-f \\ \rho_0 \end{pmatrix}. 
\end{equation}

\subsection{Triplet of Spaces}

Let us now set up a triplet of spaces which give the framework of our arguments. Taking account of \eqref{16} and \eqref{39}, we are led to introduce the following triplet $Z \subset X \subset Z^*$. The space $Z^*$ is taken as the underlying of the evolution equation \eqref{39}, i.e., $Z^* = \frak X$. The space $X$ must be taken as the definition space of the Lyapunov function \eqref{16}, namely, $X$ is set as
\begin{equation}  \label{17}
  X = \left\{\begin{pmatrix} v \\ \rho  \end{pmatrix} ;\;
    v \in L_{2,m}(I) \enskip\text{and}\enskip \rho \in H^1(I) \right\}.
\end{equation}
Then, $Z$ is naturally taken as $Z = \cl D(A)$. Thereby, its duality product is given by
\begin{equation*}
  \left<\begin{pmatrix}v \\ \rho\end{pmatrix},
        \begin{pmatrix}w \\ \varphi\end{pmatrix}\right>_{Z \times Z^*}
  = \left<v,w\right>_{H^1_m \times H^1_m{}'}
    + (A_2\rho,\varphi)_{L_2},
    \qquad  \begin{pmatrix}v \\ \rho\end{pmatrix} \in Z,\;
           \begin{pmatrix}w \\ \varphi\end{pmatrix} \in Z^*.
\end{equation*}
As a general property of triplet, it is known that
\begin{equation} \label{12}
  \|V\|_X \le \|V\|_Z^\frac12 \|V\|_{Z^*}^\frac12,
      \qquad V \in Z.
\end{equation}
\par

By \eqref{10} and \eqref{11}, the solution $V$ of \eqref{39}-\eqref{40} certainly belongs to the space
\begin{equation}  \label{26}
  V \in \cl C([0,\infty); Z) \cap \cl C^1([0,\infty); Z^*)
\end{equation}
and satisfies the global norm estimate
\begin{equation}  \label{14}
  \|V(t)\|_Z + \|\tfrac{dV}{dt}(t)\|_{Z^*} \le R, \qquad 0 \le t < \infty,
\end{equation}
$R$ being some constant.

\subsection{Lyapunov Function}

From \eqref{16}, the Lyapunov function is rewritten as
\begin{multline*}
  \Phi(V)
  = \int_I \bigg\{ac[(v+f)\log(v+f)-(v+f)] + \frac{bk}2(\rho')^2   \\
      + \frac{dk}2\rho^2 - ck(v+f)\rho\bigg\}dx,
      \qquad  V = \begin{pmatrix} v \\ \rho \end{pmatrix},
\end{multline*}
which is defined for $-f \le v \in L_{2,m}(I)$ and $\rho \in H^1(I)$. The equality \eqref{13} is rewritten as
\begin{equation}  \label{13'}
   \frac d{dt} \Phi(V(t))
   = - c\int_I (v+f)\{[a\log(v+f)-k\rho]_x\}^2dx
     - k\int_I (\rho_t)^2dx \le 0.
\end{equation}
\par

According as Proposition \ref{P2}, we can claim that
\begin{equation}  \label{18}
  \text{if $\tfrac{d}{dt} \Phi(V(t)) = 0$ at some $t = \ol t$, then $\ol V=V(\ol t)$ is a stationary solution of \eqref{39}.}
\end{equation}

\subsection{$\omega$-Limit of $V(t)$}

For the solution $V(t)$, let us redefine its $\omega$-limit set as
\begin{equation}  \label{35}
  \omega(V) = \{\ol V \in X;\; \exists t_n
            \nearrow \infty, \enskip V(t_n) \to \ol V \enskip\text{in}
            \enskip X\}.
\end{equation}
It is clear that $\emptyset \not= \omega(V) \subset \ol B^Z(0; R)$. As seen by \eqref{13*}, it holds for $V(t)$ that
\begin{equation} \label{13*'}
  \lim_{t \to \infty} \Phi(V(t)) = \inf_{0 \le t < \infty} \Phi(V(t))
  = \Phi(\ol V)
  \qquad \text{for any}\enskip  \ol V \in \omega(V).
\end{equation}
According to Proposition \ref{T2}, there is an $\omega$-limit $\ol V={}^t(\ol v,\ol\rho)$ which is a stationary solution of \eqref{39} and $(\ol v+f,\ol\rho)$ satisfies the positivity \eqref{9}. In view of this fact, we fix a constant $\delta > 0$ for which it holds that
\begin{equation}  \label{34}
  \ol v(x)+f \ge \delta \quad\text{for every}\enskip x \in \ol I.
\end{equation}
\par

Our goal is thereby to show that $V(t)$ converges asymptotically to $\ol V$.

\subsection{Suitable Extension of $\Phi(V)$}

As a matter of fact, we need values of the Lyapunov function $\Phi(V)$ only for vectors $V$ lying in some neighborhood of the $\omega$-limit $\ol V$. By this reason, it is very convenient to extend the definition space of $\Phi(V)$ to the whole $X$.
\par

In view of \eqref{34}, we extend $\log\xi$ to a smooth function $\log^\sim\xi$ in the whole line $(-\infty,\infty)$ in such a way that
\begin{equation*}
  \log^\sim\xi \equiv 0 \quad\text{for $-\infty < \xi \le 0$},\qquad
  \log^\sim\xi \equiv \log \xi \quad
       \text{for $\tfrac\delta2 < \xi < \infty$},
\end{equation*}
and the values $\log^\sim\xi$ being suitably defined for $0 < \xi \le \frac\delta2$ and put
\begin{equation}  \label{22}
  \ell(\xi) = \xi\log^\sim\xi\; -\; \xi
  \qquad\text{for}\enskip  -\infty < \xi < \infty.
\end{equation}
Using this function, we extend $\Phi(V)$ as
\begin{equation*}
  \wt\Phi(V)
  = \int_I \left\{ac\ell(v+f)
      + \frac{bk}2(\rho')^2
      + \frac{dk}2\rho^2 - ck(v+f)\rho\right\}dx,
      \qquad   V = \begin{pmatrix} v \\ \rho \end{pmatrix} \in X.
\end{equation*}
\par

The following properties of $\wt\Phi(V)$ are then verified.

\begin{proposition}  \label{P3}
The function $\wt\Phi\,{:}\, X \to \bb R$ is continuously Fr\'echet differentiable on $X$. Identifying $X$ with its adjoint $X'$, define the derivative $\wt\Phi'(V) \in X$ by the formula
\begin{equation}  \label{36}
  |\wt\Phi(V+H) - \wt\Phi(V) - (\wt\Phi'(V),H)_X| = o(\|H\|_X)
  \quad\text{as $H \to 0$ in $X$}.
\end{equation}
Then, $\wt\Phi'(V)$ is given by
\begin{equation}  \label{21}
  \wt\Phi'(V) = \begin{pmatrix} cP_m[a\ell'(v+f)-k\rho]  \\
                k[\rho-cA_2^{-1}(v+f)] \end{pmatrix},
  \qquad V = \begin{pmatrix} v \\ \rho \end{pmatrix} \in X.
\end{equation}
\end{proposition}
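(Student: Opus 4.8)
The plan is to write $\wt\Phi = \Phi_1 + \Phi_2 + \Phi_3$, where $\Phi_1(V) = ac\int_I \ell(v+f)\,dx$ is the regularized entropy term, $\Phi_2(V) = \frac{bk}2\int_I(\rho')^2\,dx + \frac{dk}2\int_I\rho^2\,dx = \frac k2\,a_2(\rho,\rho)$ by \eqref{23}, and $\Phi_3(V) = -ck\int_I(v+f)\rho\,dx$, and to differentiate each summand separately. Throughout, $X$ is identified with $X'$ by means of its own inner product, which is $(V,W)_X = (v,w)_{L_2} + a_2(\rho,\varphi)$ for $V = {}^t(v,\rho)$ and $W = {}^t(w,\varphi)$ (see \eqref{17} and \eqref{4}); the projection $P_m$ and the inverse $A_2^{-1}$ will appear in \eqref{21} precisely because the raw derivatives produce $L_2$-pairings that must be re-expressed through this inner product.

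The terms $\Phi_2$ and $\Phi_3$ are continuous polynomial functionals of degree $\le 2$ in $V$ (after the harmless affine shift $v\mapsto v+f$), hence of class $C^\infty$ on $X$; a one-line computation gives $D\Phi_2(V)H = k\,a_2(\rho,\varphi)$ and $D\Phi_3(V)H = -ck\int_I h\rho\,dx - ck\int_I(v+f)\varphi\,dx$, the remainder in the latter being $O(\|H\|_X^2)$. Since $\int_I h\,dx = 0$, one has $\int_I h\rho\,dx = \int_I h\,P_m\rho\,dx$ (cf.~\eqref{25}); and since $A_2$ is characterized by $a_2(\psi,\varphi) = (A_2\psi,\varphi)_{L_2}$, one has $(g,\varphi)_{L_2} = a_2(A_2^{-1}g,\varphi)$ for every $g \in L_2(I)$ and $\varphi \in H^1(I)$. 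Hence $\Phi_2$ contributes ${}^t(0,\,k\rho)$ to $\wt\Phi'(V)$, and $\Phi_3$ contributes ${}^t(-ckP_m\rho,\,-ckA_2^{-1}(v+f))$; together they account for the second component $k[\rho - cA_2^{-1}(v+f)]$ of \eqref{21} and for the $-ckP_m\rho$ part of its first component.

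The real work is the entropy term $\Phi_1$. I would first record elementary properties of $\ell$ from \eqref{22}: because $\log^\sim$ is smooth on $\bb R$, vanishes on $(-\infty,0]$ and equals $\log$ on $(\frac\delta2,\infty)$, the function $\ell$ is $C^1$ on $\bb R$ with $\ell(0)=0$; moreover $\ell''$ vanishes on $(-\infty,0]$, is continuous on the compact interval $[0,\frac\delta2]$, and satisfies $\ell''(\xi) = \xi^{-1} \le 2/\delta$ on $[\frac\delta2,\infty)$, so $\ell''$ is bounded on all of $\bb R$. Consequently $\ell'$ is globally Lipschitz and $|\ell(\xi)| \le C(1+\xi^2)$, so that $v \mapsto \ell(v+f)$ maps $L_{2,m}(I)$ into $L_1(I)$ (whence $\wt\Phi$ is finite on all of $X$) and $v \mapsto \ell'(v+f)$ maps $L_{2,m}(I)$ Lipschitz-continuously into $L_2(I)$. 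Applying pointwise the Taylor estimate $|\ell(\xi+\zeta) - \ell(\xi) - \ell'(\xi)\zeta| \le \frac12\|\ell''\|_{L_\infty}\zeta^2$ with $\xi = v(x)+f$, $\zeta = h(x)$, integrating over $I$, and using $\int_I h\,dx=0$ to insert $P_m$, I obtain
\[
  \bigl| \Phi_1(V+H) - \Phi_1(V) - ac{\textstyle\int_I} P_m[\ell'(v+f)]\,h\,dx \bigr|
  \le \tfrac12 ac\,\|\ell''\|_{L_\infty}\,\|h\|_{L_2}^2 = o(\|H\|_X)
  \quad\text{as $H \to 0$ in $X$},
\]
so $\Phi_1$ is Fréchet differentiable, contributing ${}^t(acP_m\ell'(v+f),\,0)$ to $\wt\Phi'(V)$. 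Adding the three contributions yields exactly \eqref{21}, and $V \mapsto \wt\Phi'(V)$ is continuous because each of the maps $v \mapsto P_m\ell'(v+f)$ (Lipschitz, as above), $\rho \mapsto \rho$, $\rho \mapsto P_m\rho$ and $v \mapsto A_2^{-1}(v+f)$ (bounded affine from $L_2(I)$ into $H^1(I)$) is continuous.

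I do not expect a serious obstacle: the only analytically delicate point is the entropy term, and the whole purpose of passing from $\Phi$ in \eqref{16} to the extension $\wt\Phi$ via the cutoff $\log^\sim$ is to remove both the singularity of $\log$ at $0$ and its growth at $+\infty$, which is exactly what makes $\ell'$ globally Lipschitz and the superposition operator $v\mapsto\ell'(v+f)$ tame on $L_2(I)$. Had one kept the genuine logarithm, $\ell'=\log$ would be neither bounded nor Lipschitz near $0$, and one could only differentiate on a neighbourhood of the $\omega$-limit $\ol V$ where \eqref{34} keeps $v+f\ge\delta$; that is presumably why the extension was introduced in the first place. The one place where care is genuinely needed is the consistent use of the inner product of $X$ in the identification $X\cong X'$, which is what forces the appearance of $P_m$ and $A_2^{-1}$ in \eqref{21}.
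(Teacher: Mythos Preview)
Your proposal is correct and follows essentially the same approach as the paper: both split $\wt\Phi$ into the entropy term, the quadratic term in $\rho$, and the bilinear coupling, differentiate each piece, and then rewrite the resulting $L_2$-pairings through the $X$-inner product via $P_m$ and $A_2^{-1}$. The only cosmetic difference is that the paper writes the entropy remainder using the integral form $\int_0^1[\ell'(v+\theta h+f)-\ell'(v+f)]h\,d\theta$ before invoking $\sup_\xi|\ell''(\xi)|<\infty$, whereas you invoke the pointwise second-order Taylor bound directly; both arguments use exactly the same fact about $\ell''$ and yield the same $O(\|h\|_{L_2}^2)$ control.
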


\begin{proof}
For $v,\, h \in L_{2,m}(I)$, we have
\begin{align*}
  \int_I [\ell(v+h+f)&-\ell(v+f)-\ell'(v+f)h]dx  \\
  &= \int_I \left[\int_0^1 \ell'(v+\theta h+f)hd\theta-\ell'(v+f)h\right]dx  \\
  &= \int_I \int_0^1 \left[\ell'(v+\theta h+f)-\ell'(v+f)\right]h\,d\theta dx.
\end{align*}
In view of \eqref{22}, it is seen that $\sup_{-\infty<\xi<\infty} |\ell''(\xi)|<\infty$. Therefore,
\begin{equation*}
  \left|\int_I [\ell(v+h+f)-\ell(v+h)-\ell'(v+f)h]dx\right|
  \le C\|h\|_{L_{2,m}}^2.
\end{equation*}
This means that the function $v \mapsto \int_I \ell(v+f)dx$ is Fr\'echet differentiable in $L_{2,m}(I)$ and its derivative at $v$ is given by $h \mapsto \int_I \ell'(v+f)h\,dx = (P_m\ell'(v+f),h)_{L_{2,m}}$.
\par

Next, for $\rho,\, \eta \in H^1(I)$, we have
\begin{equation*}
  \int_I \left[\tfrac b2(\rho'+\eta')^2 + \tfrac d2(\rho+\eta)^2
             - \tfrac b2(\rho')^2 - \tfrac d2\rho^2   
             - b\rho'\eta' - d\rho\eta\right]dx
    = \int_I [\tfrac b2(\eta')^2+\tfrac d2\eta^2]dx.
\end{equation*}
Therefore,
\begin{equation*}
  \left|\int_I \left[\tfrac b2(\rho'+\eta')^2 + \tfrac d2(\rho+\eta)^2
             - \tfrac b2(\rho')^2 - \tfrac d2\rho^2\right]dx
             - a_2(\rho,\eta)\right|
    \le C\|\eta\|_{H^1}^2.
\end{equation*}
This means that the function $\rho \mapsto \int_I [\frac b2(\rho')^2+\frac d2\rho^2]dx$ is Fr\'echet differentiable in $H^1(I)$ and its derivative at $\rho$ is given by $\eta \mapsto a_2(\rho,\eta) = (\rho,\eta)_{H^1}$ due to \eqref{4}.
\par

Finally, we have
\begin{equation*}
   \int_I [(v+h+f)(\rho+\eta) - (v+f)\rho -(v+f)\eta - h\rho]dx
   = \int_I h\eta\,dx.
\end{equation*}
Here, since
\begin{equation*}
  \int_I (v+f)\eta\, dx
  = (A_2^\frac12A_2^{-1}(v+f),A_2^\frac12\eta)_{L_2}
  = (A_2^{-1}(v+f),\eta)_{H^1}
\end{equation*}
due to \eqref{4} and \eqref{23} and since $\int_I h\rho\,dx = (h,P_m\rho)_{L_{2,m}}$, it is seen that the function $V \mapsto \int_I (v+f)\rho\,dx$ is Fr\'echet differentiable in $X$ and its derivative at $V$ is given by $H = {}^t(h,\eta) \mapsto (P_m\rho,h)_{L_{2,m}} + (A_2^{-1}(v+f),\eta)_{H^1}$.
\par

We have thus proved that $\wt\Phi(V)$ is differentiable in the sense of \eqref{36} and its derivative is given by \eqref{21}. It is easy to verify that $V \mapsto \wt\Phi'(V)$ is continuous from $X$ into itself.
\end{proof}

\begin{proposition}  \label{P4}
Let $\ol V$ be the $\omega$-limit of $V(t)$ fixed in Subsection 3.4. Then, $\ol V$ satisfies $\wt\Phi'(\ol V)=0$, that is, $\ol V$ is a critical point of $\wt\Phi(V)$.
\end{proposition}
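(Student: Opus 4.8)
The plan is to evaluate the explicit formula \eqref{21} for $\wt\Phi'$ at the fixed $\omega$-limit $\ol V={}^t(\ol v,\ol\rho)$ and to verify that both of its components vanish, using nothing more than the facts that $\ol V$ is a stationary solution of \eqref{39} and that the lower bound \eqref{34} holds. Write $\ol u=\ol v+f$; by Subsection 2.4, $\ol u\in H^2_N(I)$ and $\ol\rho\in H^3_N(I)$, and by \eqref{34}, $\ol u\ge\delta$ on $\ol I$.

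For the second component of \eqref{21}, I would simply read off the $\rho$-line of the stationary equation $A\ol V=F(\ol V)$, namely $A_2\ol\rho=c(\ol v+f)$, which gives $\ol\rho=cA_2^{-1}(\ol v+f)$; hence $k[\ol\rho-cA_2^{-1}(\ol v+f)]=0$. For the first component, the $v$-line of $A\ol V=F(\ol V)$ reads $\cl A_1\ol v=-k[(\ol v+f)\ol\rho']'$ in $H^1_m(I)'$; since $\ol u\in H^2_N(I)$ (indeed $\ol u\in C^2(\ol I)$, $\ol\rho\in C^2(\ol I)$ by Sobolev embedding), this is the classical identity $a\ol u''=k[\ol u\ol\rho']'$ on $I$. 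I would integrate this once from $\alpha$ and invoke the Neumann conditions $\ol u'(\alpha)=\ol\rho'(\alpha)=0$ to obtain $a\ol u'=k\ol u\ol\rho'$ on $\ol I$; dividing by $\ol u\ge\delta>0$ yields $[a\log\ol u-k\ol\rho]'\equiv 0$, so that $a\log\ol u-k\ol\rho$ is constant on $\ol I$ (this is exactly the first relation in \eqref{24}). Because $\ol u\ge\delta>\tfrac\delta2$, the cutoff $\log^\sim$ agrees with $\log$ along $\ol u$, so $\ell'(\ol v+f)=\log^\sim(\ol u)=\log\ol u$ and therefore $a\ell'(\ol v+f)-k\ol\rho$ equals that same constant. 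Applying $P_m$, which annihilates constants, gives $cP_m[a\ell'(\ol v+f)-k\ol\rho]=0$. Combining the two components, $\wt\Phi'(\ol V)={}^t(0,0)=0$.

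The computation is direct, and I do not expect a genuine obstacle here; the only points that require a little care are (i) promoting the distributional stationary equations $A\ol V=F(\ol V)$ to pointwise ODEs on $\ol I$, which is legitimate precisely because the $\omega$-limit lies in $H^2_N(I)\times H^3_N(I)\hookrightarrow C^1(\ol I)\times C^2(\ol I)$, and (ii) the observation that the modified density $\ell$ coincides with the genuine entropy density along $\ol V$ thanks to \eqref{34}, so that replacing $\Phi$ by its extension $\wt\Phi$ does not interfere with the argument.
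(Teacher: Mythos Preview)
Your proof is correct and follows essentially the same approach as the paper: both verify that the two components of \eqref{21} vanish at $\ol V$ by establishing the relations \eqref{24}, invoking \eqref{34} to identify $\ell'(\ol v+f)$ with $\log(\ol v+f)$, and using that $P_m$ annihilates constants. The only minor tactical difference is that the paper obtains the first relation of \eqref{24} by applying Proposition~\ref{P2} to the constant-in-time solution (i.e.\ via the vanishing of the Lyapunov dissipation \eqref{13}), whereas you obtain it by integrating the stationary $u$-equation once and dividing by $\ol u\ge\delta$; both routes are equally valid.
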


\begin{proof}
Consider the constant functions $\ol u(t) \equiv \ol u = \ol v+f,\, 0 \le t < \infty,$ and $\ol\rho(t) \equiv \ol\rho,\, 0 \le t < \infty$. Since $(\ol u(\cdot),\, \ol\rho(\cdot))$ is a stationary solution of \eqref{1}, Proposition \ref{P2} is available. As a consequence, the equalities in \eqref{24} hold true, i.e.,
\begin{equation*}
  a\log(\ol v+f)-k\ol\rho={\rm const.}
  \qquad\text{and}\qquad  b\ol\rho''-d\ol\rho+c(\ol v+f)=0.
\end{equation*}
Hence, $P_m[a\log(\ol v+f)-k\ol\rho]=0$ due to \eqref{25} and $A_2\ol\rho-c(\ol v+f)=0$. Noticing that $\ell'(\ol v+f) = \log(\ol v+f)$ by \eqref{34} and \eqref{22}, we observe that $\wt\Phi'(\ol V)=0$.
\end{proof}

\begin{proposition}
If $V \in Z$, then its derivative $\wt\Phi'(V)$ must also be in $Z$. Moreover, $\wt\Phi'\,{:}\, Z \to Z$ is continuous from $Z$ into itself.
\end{proposition}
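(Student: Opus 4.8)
The plan is to read the two components of $\wt\Phi'(V)$ off formula \eqref{21} and to verify, component by component, first that $\wt\Phi'(V)$ lands in $Z=H^1_m(I)\times H^2_N(I)$ whenever $V\in Z$, and then that the resulting map $Z\to Z$ is continuous. The only genuinely nontrivial ingredient will be controlling the superposition operator $v\mapsto\ell'(v+f)$ on $H^1(I)$, for which I intend to use the one-dimensional Sobolev embedding $H^1(I)\hookrightarrow\cl C(\ol I)$ together with the global boundedness of $\ell''$ and $\ell'''$ on $\bb R$ (the boundedness of $\ell''$ being the one already exploited in the proof of Proposition \ref{P3}, and that of $\ell'''$ following from the explicit form \eqref{22} of $\ell$ and the choice of $\log^\sim$ in exactly the same way).

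For the membership claim, I would take $V={}^t(v,\rho)\in Z$. Since $v+f\in H^1(I)\hookrightarrow\cl C(\ol I)$, the function $v+f$ ranges over a bounded interval of $\bb R$; as $\ell'$ is of class $\cl C^\infty(\bb R)$ with $\ell''$ bounded, the chain rule gives $\ell'(v+f)\in H^1(I)$ with derivative $\ell''(v+f)\,v'\in L_2(I)$. Hence $a\ell'(v+f)-k\rho\in H^1(I)$, and since $P_m$ maps $H^1(I)$ continuously into $H^1_m(I)$, the first component of $\wt\Phi'(V)$ lies in $H^1_m(I)$. For the second component, $v+f\in L_2(I)$ and $A_2$ is boundedly invertible from $L_2(I)$ onto $\cl D(A_2)=H^2_N(I)$, so $A_2^{-1}(v+f)\in H^2_N(I)$, and together with $\rho\in H^2_N(I)$ this gives $k[\rho-cA_2^{-1}(v+f)]\in H^2_N(I)$. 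Combining the two components yields $\wt\Phi'(V)\in Z$.

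For continuity I would take $V_n={}^t(v_n,\rho_n)\to V={}^t(v,\rho)$ in $Z$, i.e.\ $v_n\to v$ in $H^1_m(I)$ (hence, by the Poincar\'e--Wirtinger inequality and the Sobolev embedding, also in $H^1(I)$ and in $\cl C(\ol I)$) and $\rho_n\to\rho$ in $H^2_N(I)$. The second component is immediate: $v_n+f\to v+f$ in $L_2(I)$ and $A_2^{-1}$ is bounded into $H^2_N(I)$. For the first component, since $P_m$ is bounded and $\rho_n\to\rho$ in $H^1(I)$, it suffices to show $\ell'(v_n+f)\to\ell'(v+f)$ in $H^1(I)$: the $L_2$-part is dominated by $\sup_{\bb R}|\ell''|\cdot\|v_n-v\|_{L_2}$, and for the derivatives I would split
\begin{equation*}
  \ell''(v_n+f)\,v_n' - \ell''(v+f)\,v' = \ell''(v_n+f)(v_n'-v') + [\ell''(v_n+f)-\ell''(v+f)]\,v'
\end{equation*}
and bound the first summand by $\sup_{\bb R}|\ell''|\cdot\|v_n'-v'\|_{L_2}\to0$ and the second by $\sup_{\bb R}|\ell'''|\cdot\|v_n-v\|_{\cl C(\ol I)}\,\|v'\|_{L_2}\to0$, using the uniform convergence $v_n\to v$ in $\cl C(\ol I)$.

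There is no deep obstacle: the proposition is a piece of regularity bookkeeping. The single point requiring care is the continuity of the Nemytskii map $v\mapsto\ell'(v+f)$ from $H^1(I)$ into itself, where the $\cl C(\ol I)$-convergence of $v_n$ is indispensable to absorb the factor $\ell''(v_n+f)-\ell''(v+f)$ against $v'$, and where one must know in advance that $\ell'''$ is globally bounded. Everything else reduces to the boundedness of $P_m\colon H^1(I)\to H^1_m(I)$ and of $A_2^{-1}\colon L_2(I)\to H^2_N(I)$.
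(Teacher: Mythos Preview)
Your proposal is correct and follows exactly the same approach as the paper: treat the two components of $\wt\Phi'(V)$ separately, reducing the first to the continuity of the Nemytskii map $v\mapsto\ell'(v+f)$ from $H^1_m(I)$ into $H^1(I)$ and the boundedness of $P_m\colon H^1(I)\to H^1_m(I)$, and the second to the boundedness of $A_2^{-1}\colon L_2(I)\to H^2_N(I)$. The paper simply asserts that the Nemytskii map is continuous (``it is easy to see''), whereas you spell out the splitting argument using $\sup|\ell''|<\infty$, $\sup|\ell'''|<\infty$, and the embedding $H^1(I)\hookrightarrow\cl C(\ol I)$; this is precisely the kind of detail the paper elides.
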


\begin{proof}
It is easy to see that the mapping $v \mapsto \ell'(v)$ is continuous from $H^1_m(I)$ into $H^1(I)$. Meanwhile, from \eqref{25}, $P_m$ is seen to be a continuous projection from $H^1(I)$ onto $H^1_m(I)$, too. Therefore, $V \in Z$ implies $P_m[a\ell'(v+f)-k\rho] \in H^1_m(I)$. Meanwhile, it is clear that $V \in Z$ implies $\rho-cA_2^{-1}(v+f) \in \cl D(A_2)$. Hence, $V \in Z$ implies $\wt\Phi'(V) \in Z$.
\par

It is also immediate to see that $V \mapsto \wt\Phi'(V)$ is continuous from $Z$ into itself.
\end{proof}

As an immediate consequence of this proposition and \eqref{26}, we have
\begin{equation}  \label{37}
  \tfrac d{dt}\wt\Phi(V(t))
      = \left<\wt\Phi'(V(t)),\tfrac{dV}{dt}(t)\right>_{Z \times Z^*},
      \qquad  0 < t < \infty.
\end{equation}

\section{Asymptotic Convergence of $V(t)$}

It is now possible to state the four fundamental properties which the solution $V(t)$, its $\omega$-limit $\ol V$ and the Lyapunov function $\wt\Phi(V)$ enjoy in the framework of $Z \subset X \subset Z^*$.
\smallskip

\noindent
(I) {\sl Critical Condition.} The $\omega$-limit $\ol V$ is a critical point of $\wt\Phi(V)$, i.e., $\wt\Phi'(\ol V)=0$.
\smallskip

\noindent
(II) {\sl Lyapunov Function.} There is a radius $r' > 0$ such that
\begin{equation}  \label{15}
  \wt\Phi(V(t)) > \wt\Phi(\ol V) \quad\text{and}\quad
  \tfrac d{dt} \wt\Phi(V(t)) \le 0
  \qquad\text{if}\enskip V(t) \in B^X(\ol V; r').
\end{equation}
\smallskip

\noindent
(III) {\sl Angle Condition.}
There exist a constant $\ve > 0$ and a radius $r''>0$ such that
\begin{equation}  \label{19}
  -\left<\wt\Phi'(V(t)),\tfrac{dV}{dt}(t)\right>_{Z \times Z^*}
  \ge \ve \|\wt\Phi'(V(t))\|_Z \|\tfrac{dV}{dt}(t)\|_{Z^*}
  \qquad\text{if}\enskip  V(t) \in B^X(\ol V; r'').
\end{equation}
\smallskip

\noindent
(IV) {\sl Gradient Inequality.}
There exist an exponent $0 < \theta \le \frac12$, a radius $r''' > 0$ and a constant $D > 0$ for which it holds true that
\begin{equation}  \label{28}
  -\|\wt\Phi'(V(t))\|_Z
  \ge D |\wt\Phi(V(t)) - \wt\Phi(\ol V)|^{1-\theta}
  \qquad\text{if}\enskip V(t) \in B^X(\ol V; r''').
\end{equation}
This inequality is called the {\L}ojasiewicz-Simon gradient inequality of $\wt\Phi(V)$.
\smallskip

Critical Condition was already verified by Proposition \ref{P4}.
\par

Also the two conditions of Lyapunov Function were essentially verified by \eqref{13'} and \eqref{18}. Indeed, as $H^{\frac34}(I) \subset \cl C(\ol I)$, there is a radius $r'>0$ such that $B^{L_2}(\ol v; r') \cap \ol B^{H^1}(0; R) \subset B^{\cl C}(\ol v; \frac\delta2)$, $R$ being the constant in \eqref{14}. By definition, it then follows that
\begin{equation}  \label{33}
   \wt\Phi(V) = \Phi(V)
   \qquad\text{if}\enskip V \in B^X(\ol V; r') \cap \ol B^Z(0; R).
\end{equation}
Therefore, in view of \eqref{14}, we observe from \eqref{13'} that $\frac{d}{dt}\wt\Phi(V(t)) = \frac{d}{dt}\Phi(V(t)) \le 0$ if $V(t) \in B^X(\ol V; r')$. Meanwhile, as mentioned in \eqref{18}, if $\frac{d}{dt}\Phi(V(t))=0$ at some $t=\ol t$, then $V(\ol t)$ is a stationary solution of \eqref{39} and the assertion of Theorem \ref{T3} to be proved is automatically valid. Therefore, it suffices to argue under the condition that $\frac{d}{dt}\Phi(V(t)) < 0$ for every $0 < t < \infty$, namely, $\Phi(V(t)) > \Phi(\ol V)$ for every $0 \le t < \infty$ due to \eqref{13*'}.
\par

To the contrary, in order to verify Angle Condition and Gradient Inequality, we have to investigate much deeper properties of $\wt\Phi(V)$. So, their verification will be described in the subsequent two sections. Before that, we here want to state and prove our main result which is directly derived by using these four properties only.

\begin{theorem} \label{T3}
As $t \to \infty$, the solution $V(t)$ of \eqref{39}-\eqref{40} converges to $\ol V$ in $Z^*$ at the rate
\begin{equation}  \label{30}
  \|V(t)-\ol V\|_{Z^*}
     \le (D\ve\theta)^{-1} [\Phi(V(t))-\Phi(\ol V)]^\theta
  \qquad\text{for all sufficiently large $t$}.
\end{equation}
\end{theorem}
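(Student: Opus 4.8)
The plan is to run the classical {\L}ojasiewicz--Simon argument: use the four properties (I)--(IV) to show that the ``length'' functional $\int^\infty \|\frac{dV}{dt}(t)\|_{Z^*}\,dt$ is finite, which forces $V(t)$ to converge in $Z^*$, and then integrate the differential inequality obeyed by $\Phi(V(t))^\theta$ to extract the explicit rate \eqref{30}. First I would argue, as already prepared after \eqref{33}, that we may assume $\frac{d}{dt}\Phi(V(t))<0$ for all $t>0$, so that $\Phi(V(t))-\Phi(\ol V)>0$ strictly for all $t$ and tends to $0$ as $t\to\infty$ by \eqref{13*'}. Next I would introduce the auxiliary function $\psi(t) = [\Phi(V(t))-\Phi(\ol V)]^\theta$, which is strictly positive, and differentiate it on the set where $V(t)$ lies in the ball $B^X(\ol V;r)$ with $r=\min\{r',r'',r'''\}$: using \eqref{37} and \eqref{33},
\begin{equation*}
  -\frac{d\psi}{dt}(t)
  = \theta\,[\Phi(V(t))-\Phi(\ol V)]^{\theta-1}
       \left(-\left<\wt\Phi'(V(t)),\tfrac{dV}{dt}(t)\right>_{Z\times Z^*}\right).
\end{equation*}
Now apply the Angle Condition \eqref{19} to bound the duality product below by $\ve\|\wt\Phi'(V(t))\|_Z\|\frac{dV}{dt}(t)\|_{Z^*}$, and then the Gradient Inequality \eqref{28} to bound $\|\wt\Phi'(V(t))\|_Z$ below by $D[\Phi(V(t))-\Phi(\ol V)]^{1-\theta}$; the factor $[\Phi(V(t))-\Phi(\ol V)]^{\theta-1}$ exactly cancels against $[\Phi(V(t))-\Phi(\ol V)]^{1-\theta}$, leaving
\begin{equation*}
  -\frac{d\psi}{dt}(t) \ge D\ve\theta\,\left\|\tfrac{dV}{dt}(t)\right\|_{Z^*}
  \qquad\text{whenever}\enskip V(t)\in B^X(\ol V;r).
\end{equation*}

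The main obstacle, as usual in this circle of arguments, is the bootstrap showing that $V(t)$ does \emph{not} leave the ball $B^X(\ol V;r)$ once it has entered it and $t$ is large enough; one cannot integrate the inequality above globally until this is established. I would handle it by a standard continuation/connectedness argument: since $\ol V\in\omega(V)$, there is a sequence $t_n\nearrow\infty$ with $V(t_n)\to\ol V$ in $X$, so for $n$ large $\psi(t_n)$ is as small as we like and $V(t_n)$ is deep inside the ball, say in $B^X(\ol V;r/2)$. Fix such an $n$ and set $t_* = \sup\{t\ge t_n : V(s)\in B^X(\ol V;r)\ \text{for all}\ s\in[t_n,t]\}$. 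On $[t_n,t_*)$ the inequality above is valid, so integrating gives
\begin{equation*}
  \int_{t_n}^{t}\left\|\tfrac{dV}{dt}(s)\right\|_{Z^*}\,ds
  \le (D\ve\theta)^{-1}[\psi(t_n)-\psi(t)] \le (D\ve\theta)^{-1}\psi(t_n),
  \qquad t_n\le t<t_*,
\end{equation*}
and hence by the triangle inequality in $Z^*$,
\begin{equation*}
  \|V(t)-\ol V\|_{Z^*}
  \le \|V(t)-V(t_n)\|_{Z^*} + \|V(t_n)-\ol V\|_{Z^*}
  \le (D\ve\theta)^{-1}\psi(t_n) + \|V(t_n)-\ol V\|_{Z^*}.
\end{equation*}
Here I need to translate smallness in $Z^*$ into smallness in $X$ to re-enter the ball $B^X(\ol V;r)$; this is where the interpolation inequality \eqref{12} together with the uniform bound $\|V(t)\|_Z\le R$ from \eqref{14} is used, since $\|V(t)-\ol V\|_X\le \|V(t)-\ol V\|_Z^{1/2}\|V(t)-\ol V\|_{Z^*}^{1/2}\le (2R)^{1/2}\|V(t)-\ol V\|_{Z^*}^{1/2}$. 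Choosing $n$ large enough that $(2R)^{1/2}\big[(D\ve\theta)^{-1}\psi(t_n)+\|V(t_n)-\ol V\|_{Z^*}\big]^{1/2}<r$, we conclude that $V(t)$ stays in $B^X(\ol V;r)$ on all of $[t_n,\infty)$, so in fact $t_*=\infty$.

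Once $t_*=\infty$ is known, the integrated inequality holds for all $t\ge t_n$, so $\int_{t_n}^\infty\|\frac{dV}{dt}(s)\|_{Z^*}\,ds<\infty$; this Cauchy criterion shows $V(t)$ converges in $Z^*$ to some limit, which must be $\ol V$ since $V(t_n)\to\ol V$. Finally, for the rate: for $t\ge t_n$, integrate the inequality from $t$ to $\infty$ instead, using $\psi(s)\to 0$ as $s\to\infty$, to get
\begin{equation*}
  \|V(t)-\ol V\|_{Z^*}
  \le \int_t^\infty\left\|\tfrac{dV}{dt}(s)\right\|_{Z^*}\,ds
  \le (D\ve\theta)^{-1}\psi(t)
  = (D\ve\theta)^{-1}[\Phi(V(t))-\Phi(\ol V)]^\theta,
\end{equation*}
which is exactly \eqref{30} and completes the proof. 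The only genuinely delicate point beyond bookkeeping is the "stays in the ball" step above; everything else is a direct chaining of (I)--(IV) with the triplet interpolation \eqref{12}.
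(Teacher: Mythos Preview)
Your proof is correct and follows essentially the same route as the paper: the paper packages the differential inequality $-\tfrac{d}{d\tau}[\wt\Phi(V(\tau))-\wt\Phi(\ol V)]^\theta \ge D\ve\theta\|\tfrac{dV}{d\tau}\|_{Z^*}$ into a separate proposition giving $\|V(t)-V(s)\|_{Z^*}\le (D\ve\theta)^{-1}[\wt\Phi(V(s))-\wt\Phi(\ol V)]^\theta$, then runs the same interpolation-based bootstrap (via \eqref{12}, \eqref{14}) to trap $V(t)$ in the ball, and extracts the rate by sending $t=t_n\to\infty$ in that proposition rather than by integrating over $[t,\infty)$ as you do. The only organizational difference is that the paper interpolates $\|V(t)-V(t_N)\|_X$ directly (their (4.4)) to get the $r/3+r/3$ trapping, whereas you first bound $\|V(t)-\ol V\|_{Z^*}$ and then interpolate; both are equivalent and your argument is complete.
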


This theorem is actually proved as an immediate consequence of the following crucial proposition.

\begin{proposition}
Put $r = \min \{r', r'', r'''\}$ and let $0 \le s < t < \infty$ be such that, for every $\tau \in [s,t]$, the value $V(\tau)$ stays in $B^X(\ol V; r)$. Then, we must have
\begin{equation}  \label{27}
  \left\|V(t)-V(s)\right\|_{Z^*}
  \le (D\ve\theta)^{-1}[\wt\Phi(V(s))-\wt\Phi(\ol V)]^\theta.
\end{equation} 
\end{proposition}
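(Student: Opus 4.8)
The proof proposal for Proposition 4.2 (the estimate \eqref{27}) runs as follows.

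\textbf{Strategy.} This is the standard {\L}ojasiewicz--Simon argument: we want to show that the ``energy distance'' $[\wt\Phi(V(\tau))-\wt\Phi(\ol V)]^\theta$ is a Lyapunov-type quantity whose time derivative dominates $\|\tfrac{dV}{dt}(\tau)\|_{Z^*}$, and then integrate from $s$ to $t$. The plan is to set $G(\tau) = [\wt\Phi(V(\tau))-\wt\Phi(\ol V)]^\theta$ for $\tau \in [s,t]$ and differentiate. Since $r \le r'$, the Lyapunov property (II) guarantees $\wt\Phi(V(\tau)) > \wt\Phi(\ol V)$ on $[s,t]$, so $G$ is well defined and differentiable there (recall also that we may assume $\tfrac{d}{dt}\wt\Phi(V(\tau)) < 0$, else $V(\ol t)$ is already a stationary solution and there is nothing to prove).

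\textbf{Key steps.} First I would compute, using \eqref{37} and the chain rule,
\begin{equation*}
  -\frac{dG}{d\tau}(\tau)
  = -\theta\,[\wt\Phi(V(\tau))-\wt\Phi(\ol V)]^{\theta-1}
       \frac{d}{d\tau}\wt\Phi(V(\tau))
  = \theta\,[\wt\Phi(V(\tau))-\wt\Phi(\ol V)]^{\theta-1}
       \left(-\left<\wt\Phi'(V(\tau)),\tfrac{dV}{dt}(\tau)\right>_{Z\times Z^*}\right).
\end{equation*}
Next, since $r \le r''$, the Angle Condition \eqref{19} bounds the bracketed quantity below by $\ve\|\wt\Phi'(V(\tau))\|_Z\|\tfrac{dV}{dt}(\tau)\|_{Z^*}$. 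Then, since $r \le r'''$, the Gradient Inequality \eqref{28} gives $\|\wt\Phi'(V(\tau))\|_Z \ge D\,[\wt\Phi(V(\tau))-\wt\Phi(\ol V)]^{1-\theta}$. Combining these three facts, the factors $[\wt\Phi(V(\tau))-\wt\Phi(\ol V)]^{\theta-1}$ and $[\wt\Phi(V(\tau))-\wt\Phi(\ol V)]^{1-\theta}$ cancel exactly, leaving
\begin{equation*}
  -\frac{dG}{d\tau}(\tau) \ge D\ve\theta\,\left\|\tfrac{dV}{dt}(\tau)\right\|_{Z^*},
  \qquad \tau \in [s,t].
\end{equation*}
Finally I would integrate this inequality over $[s,t]$ and use $\left\|V(t)-V(s)\right\|_{Z^*} \le \int_s^t \|\tfrac{dV}{dt}(\tau)\|_{Z^*}\,d\tau$ (valid by \eqref{26}), together with $G(t) \ge 0$ and $G(s) = [\wt\Phi(V(s))-\wt\Phi(\ol V)]^\theta$, to obtain
\begin{equation*}
  \left\|V(t)-V(s)\right\|_{Z^*}
  \le \frac{1}{D\ve\theta}\int_s^t \left(-\frac{dG}{d\tau}(\tau)\right)d\tau
  \le \frac{1}{D\ve\theta}\,G(s)
  = (D\ve\theta)^{-1}[\wt\Phi(V(s))-\wt\Phi(\ol V)]^\theta,
\end{equation*}
which is \eqref{27}.

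\textbf{Main obstacle.} The computation itself is routine once properties (I)--(IV) are in hand; the only point requiring a little care is the differentiability of $G$ and the legitimacy of the chain-rule identity \eqref{37} on the closed interval $[s,t]$ --- but \eqref{37} has already been recorded for $0 < t < \infty$, and the strict inequality $\wt\Phi(V(\tau)) > \wt\Phi(\ol V)$ from (II) keeps the exponent $\theta-1 < 0$ from causing a singularity. Thus the real work of the paper lies not here but in Sections 6 and 7, where the Angle Condition and the {\L}ojasiewicz--Simon Gradient Inequality for $\wt\Phi$ must be established; granting those, this proposition is essentially a formal manipulation. (To then deduce Theorem \ref{T3} one applies \eqref{27} with a suitable sequence $s_n \to \infty$ along which $V(s_n) \to \ol V$, a standard connectedness/continuity argument showing $V(\tau)$ cannot leave $B^X(\ol V;r)$, and finally \eqref{33} to replace $\wt\Phi$ by $\Phi$.)
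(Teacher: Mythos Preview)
Your proof is correct and follows essentially the same route as the paper: define $G(\tau)=[\wt\Phi(V(\tau))-\wt\Phi(\ol V)]^\theta$, differentiate via \eqref{37}, apply the Angle Condition \eqref{19} and then the Gradient Inequality \eqref{28} so that the powers of $\wt\Phi(V(\tau))-\wt\Phi(\ol V)$ cancel, and integrate over $[s,t]$ using $\|V(t)-V(s)\|_{Z^*}\le\int_s^t\|\tfrac{dV}{d\tau}(\tau)\|_{Z^*}\,d\tau$. Your remarks on the differentiability of $G$ and the role of the strict inequality from (II) are exactly the justifications the paper invokes.
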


\begin{proof}
Since $\wt\Phi(V(\tau)) > \wt\Phi(\ol V)$ for $\tau \in [s,t]$ due to \eqref{15}, $-[\wt\Phi(V(\tau))-\wt\Phi(\ol V)]^\theta$ is differentiable in $\tau$. Furthermore, \eqref{37}, \eqref{19} and \eqref{28} yield the estimate
\begin{multline*}
  -\tfrac{d}{d\tau}[\wt\Phi(V(\tau))-\wt\Phi(\ol V)]^\theta
   = -\theta[\wt\Phi(V(\tau))-\wt\Phi(\ol V)]^{\theta-1}
               \tfrac{d}{d\tau}\wt\Phi(V(\tau))   \\
   \ge \ve\theta[\wt\Phi(V(\tau)-\wt\Phi(\ol V)]^{\theta-1}
                \|\wt\Phi'(V(\tau))\|_Z
                \left\|\tfrac{dV}{d\tau}(\tau)\right\|_{Z^*}
   \ge D\ve\theta\left\|\tfrac{dV}{d\tau}(\tau)\right\|_{Z^*}.
\end{multline*}
Integration of this inequality on $[s,t]$ gives
\begin{align*}
  [\wt\Phi(V(s))-\wt\Phi(\ol V)]^\theta
      - [\wt\Phi(V(t))-\wt\Phi(\ol V)]^\theta
  &\ge D\ve\theta\textstyle\int_s^t \left\|\tfrac{dV}{d\tau}(\tau)
                \right\|_{Z^*}d\tau   \\
  &\ge D\ve\theta\, \|V(t)-V(s)\|_{Z^*},
\end{align*}
which shows that \eqref{27} is valid for the $s$ and $t$.
\end{proof}

\begin{proof}[Proof of Theorem \ref{T3}]
Let $0 \le s < t < \infty$ such that $V(\tau) \in B^X(\ol V; r)$ for every $\tau \in [s,t]$. On account of \eqref{12} and \eqref{14}, it follows from \eqref{27} that
\begin{multline}  \label{29}
  \|V(t)-V(s)\|_X \le \|V(t)-V(s)\|_Z^\frac12 \|V(t)-V(s)\|_{Z^*}^\frac12  \\
    \le [2R(D\ve\theta)^{-1}]^\frac12
        [\wt\Phi(V(s))-\wt\Phi(\ol V)]^\frac\theta2.
\end{multline}
\par

Consider a temporal sequence $t_n \nearrow \infty$ such that $V(t_n) \to \ol V$ in $X$. Then, by \eqref{35} and \eqref{33}, there exists some $t_N$ such that
\begin{equation*}
  \|V(t_N)-\ol V\|_X \le \tfrac r3  \qquad\text{and}\qquad
  [2R(D\ve\theta)^{-1}]^\frac12
  [\wt\Phi(V(t_N))-\wt\Phi(\ol V)]^\frac\theta2 \le \tfrac r3.
\end{equation*}
Therefore, for any $t \ge t_N$ such that $V(\tau) \in B^X(\ol V; r)$ holds for every $\tau \in [t_N,t]$, \eqref{29} (with $s=t_N$) implies that
\begin{equation*}
  \|V(t)-\ol V\|_X \le \|V(t)-V(t_N)\|_X + \|V(t_N)-\ol V\|_X \le \tfrac{2r}3,
\end{equation*}
which shows an important fact that, after the $t_N$, all the values $V(t)$ stay in the ball $B^X(\ol V; \frac{2r}3) (\subset B^X(\ol V; r))$.
\par

Let $s$ be such that $t_N \le s < \infty$. For any $t_n \ge s$, apply \eqref{27} with $t=t_n$, then
\begin{equation*}
  \|V(t_n)-V(s)\|_{Z^*}
     \le (D\ve\theta)^{-1}[\wt\Phi(V(s))-\wt\Phi(\ol V)]^\theta.
\end{equation*}
Letting $n \to \infty$, we arrive at $\|\ol V-V(s)\|_{Z^*} \le (D\ve\theta)^{-1}[\wt\Phi(V(s))-\wt\Phi(\ol V)]^\theta$.
\par

On account of \eqref{33}, we conclude that \eqref{30} holds true for all $t \ge t_N$.
\end{proof}

Theorem \ref{T3} can easily be translated for the solution $(u,\rho)$ of \eqref{1}-\eqref{2}.

\begin{corollary}  \label{C}
Let $(u_0,\rho_0)$ be taken as \eqref{6'} and \eqref{9} and let $(u,\rho)$ be the global solution of \eqref{1}-\eqref{2}. Let $(\ol u,\ol\rho)$ be its $\omega$-limit obtained by Proposition \ref{T2}. Let $\Phi(u,\rho)$ be the Lyapunov function given by \eqref{16}. Then, as $t \to \infty$, $(u(t),\rho(t))$ converges to $(\ol u,\ol\rho)$ at the rate
\begin{multline*}
  \|u(t)-\ol u\|_{H^1_m{}'} + \|\rho(t)-\ol\rho\|_{L_2}
  \le (D\ve\theta)^{-1} [\Phi(u(t),\rho(t))-\Phi(\ol u,\ol\rho)]^\theta   \\
  \text{for all sufficiently large $t$}.
\end{multline*}
\end{corollary}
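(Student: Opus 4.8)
The plan is to transport Theorem \ref{T3} back to the original variables by unwinding the change of unknowns $v(x,t) = u(x,t) - f$ and the block structure of the space $\frak X = Z^*$. Recall from Subsection 3.1 that $V(t) = {}^t(v(t),\rho(t))$ with $v(t) = u(t) - f$, and that $\ol V = {}^t(\ol v,\ol\rho)$ with $\ol v = \ol u - f$; hence $V(t) - \ol V = {}^t(u(t)-\ol u,\, \rho(t)-\ol\rho)$, since the constant $f$ cancels. The first step is therefore to identify the $Z^*$-norm of this difference. By the definition of $\frak X$ in \eqref{8}, a vector ${}^t(w,\varphi) \in Z^*$ has $w \in H^1_m(I)'$ and $\varphi \in L_2(I)$, so that $\|V(t)-\ol V\|_{Z^*}$ is equivalent to $\|u(t)-\ol u\|_{H^1_m{}'} + \|\rho(t)-\ol\rho\|_{L_2}$ up to a fixed constant depending only on how the product norm on $\frak X$ is chosen. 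I would simply fix the norm on $\frak X$ to be this sum, so the two quantities are literally equal; this is a harmless normalization since all statements so far involve the constant $D$ which already absorbs any such choice.

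Next I would check that $u(t) - \ol u$ genuinely lies in $H^1_m(I)'$, i.e. that the difference has a well-defined duality pairing against $H^1_m(I)$. Both $u(t)$ and $\ol u$ have the same total mass $\|u_0\|_{L_1}$: the former by the conservation identity $\frac{d}{dt}\int_I u(x,t)\,dx = 0$ recorded at the start of Subsection 3.1, and the latter because $\ol u$ is an $\omega$-limit of $u(t)$ in $L_2(I)$ (hence in $L_1(I)$), so $\int_I \ol u = \lim_n \int_I u(t_n) = \|u_0\|_{L_1}$. Thus $u(t) - \ol u \in L_{2,m}(I) \subset H^1_m(I)'$, and the restriction of $\Phi$ to the relevant set of arguments agrees with $\wt\Phi(V(t))$ and $\wt\Phi(\ol V)$ by \eqref{33} together with Proposition \ref{T2} (which gives $\ol u > 0$ on $\ol I$, so $\ol V$ lies in the region where $\wt\Phi = \Phi$). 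Consequently $\Phi(u(t),\rho(t)) = \Phi(V(t))$ and $\Phi(\ol u,\ol\rho) = \Phi(\ol V)$, matching the right-hand sides.

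With these identifications in place, the corollary is just a rewriting of \eqref{30}: for all sufficiently large $t$,
\begin{equation*}
  \|u(t)-\ol u\|_{H^1_m{}'} + \|\rho(t)-\ol\rho\|_{L_2}
  = \|V(t)-\ol V\|_{Z^*}
  \le (D\ve\theta)^{-1}[\Phi(V(t))-\Phi(\ol V)]^\theta
  = (D\ve\theta)^{-1}[\Phi(u(t),\rho(t))-\Phi(\ol u,\ol\rho)]^\theta,
\end{equation*}
and letting $t \to \infty$ the right-hand side tends to $0$ by \eqref{13*'}, giving convergence of $(u(t),\rho(t))$ to $(\ol u,\ol\rho)$ in $H^1_m(I)' \times L_2(I)$. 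There is essentially no obstacle here beyond bookkeeping; the only point requiring a line of care is the mass-matching argument that puts $u(t)-\ol u$ in $L_{2,m}(I)$ so that the $H^1_m(I)'$-norm makes sense, and the verification that $\ol V$ is in the good region so that the tilded and untilded Lyapunov functions coincide — both of which follow immediately from facts already established (conservation of mass, Proposition \ref{T2}, and \eqref{33}).
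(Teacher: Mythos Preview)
Your proposal is correct and matches the paper's own treatment: the paper does not give a separate proof of the corollary but simply remarks that Theorem \ref{T3} ``can easily be translated for the solution $(u,\rho)$,'' and your argument carries out exactly that translation. The bookkeeping you supply (mass conservation putting $u(t)-\ol u$ in $L_{2,m}(I)$, and the identification $\Phi(V(t))=\Phi(u(t),\rho(t))$) is precisely what the paper is tacitly assuming.
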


\section{Angle Condition}

We prove that the condition \eqref{19} is fulfilled.
\par

\noindent
I) Let $r' > 0$ be as in \eqref{33}. Since $\wt\Phi(V(t)) = \Phi(V(t))$ for $V(t) \in B^X(\ol V; r')$, it follows that $\frac d{dt}\wt\Phi(V(t)) = \frac d{dt}\Phi(V(t))$ for such $V(t)$'s. First, let us prove that
\begin{equation}  \label{31}
  \left|\tfrac d{dt}\wt\Phi(V(t))\right|
  \ge \ve'\|\wt\Phi'(V(t))\|_Z^2
  \qquad\text{if}\enskip V(t) \in B^X(\ol V; r'),
\end{equation}
with some constant $\ve' > 0$.
\par

In view of \eqref{5} and \eqref{21}, we observe that
\begin{equation*}
  \|\wt\Phi'(V(t))\|_Z^2
  \le C\{\|P_m[a\log(v(t)+f)-k\rho(t)]\|_{H^1_m}^2
       + \|\rho-cA_2^{-1}(v(t)+f)\|_{H^2_N}^2\}.
\end{equation*}
Furthermore, on account of \eqref{7} and \eqref{5}, we observe that
\begin{equation*}
  \|\wt\Phi'(V(t))\|_Z^2
  \le C\{\|[a\log(v(t)+f)-k\rho(t)]_x\|_{L_2}^2
       + \|A_2\rho-c(v(t)+f)\|_{L_2}^2\}.
\end{equation*}
Therefore, since $v(t)+f \ge \frac\delta2$ due to \eqref{34} and \eqref{33}, it follows that
\begin{equation*}
  \|\wt\Phi'(V(t))\|_Z^2
  \le C\{\|\sqrt{v(t)+f}\,[a\log(v(t)+f)-k\rho(t)]_x\|_{L_2}^2
         + \|\tfrac{d\rho}{dt}(t)\|_{L_2}^2\}.
\end{equation*}
Hence, in view of \eqref{13}, we conclude that \eqref{31} is satisfied.
\smallskip

\noindent
II) Next, let us prove that
\begin{equation}  \label{32}
  \left|\tfrac d{dt}\wt\Phi(V(t))\right|
  \ge \ve''\|\tfrac{dV}{dt}(t)\|_{Z^*}^2
  \qquad\text{if}\enskip V(t) \in B^X(\ol V; r'),
\end{equation}
with some constant $\ve'' > 0$.
\par

From \eqref{8}, we have $\|\tfrac{dV}{dt}(t)\|_{Z^*}^2 \le C\{\|\tfrac{dv}{dt}(t)\|_{{H^1_m}'}^2 + \|\tfrac{d\rho}{dt}(t)\|_{L_2}^2\}$. Here, noticing that $\tfrac{dv}{dt}(t) = \{(v(t)+f)[a\log(v(t)+f)-k\rho(t)]'\}'$, we apply Lemma \ref{L1} to observe that
\begin{equation*}
  \|\tfrac{dv}{dt}(t)\|_{{H^1_m}'}^2
     \le C\|(v(t)+f)[a\log(v(t)+f)-k\rho(t)]'\|_{L_2}^2.
\end{equation*}
Furthermore, on account of \eqref{14}, we observe that
\begin{equation*}
 \|\tfrac{dv}{dt}(t)\|_{{H^1_m}'}^2 \le C\|\sqrt{v(t)+f}\,
       [a\log(v(t)+f)-k\rho(t)]'\|_{L_2}^2.
\end{equation*}
Hence, in view of \eqref{13}, we conclude that \eqref{32} is satisfied.
\par

On account of \eqref{37}, it is clear that \eqref{31} and \eqref{32} yield the desired inequality \eqref{19}.

\section{Gradient Inequality}

We next prove that the inequality \eqref{28} is fulfilled. It is necessary, however, to investigate much deeper properties of $\wt\Phi(V)$. Especially, we have to make use of analyticity of $\wt\Phi(V)$ in a suitable neighborhood of $\ol V$ in $X$.

\subsection{Differentiability of $\wt\Phi'(U)$}

We begin with investigating differentiability of $\wt\Phi'(V)$.

\begin{proposition}  \label{P6}
The mapping $\wt\Phi'\,{:}\, X \to X$ is G\^ateaux differentiable at $\ol V$ with the derivative
\begin{equation}  \label{41}
  \wt\Phi''(\ol V)H = \begin{pmatrix}
    cP_m\left[\tfrac{ah}{\ol v+f} - k\eta\right]  \\
    k[\eta - cA_2^{-1}h]  \end{pmatrix}
  \qquad\text{for direction}\enskip 
      H = \begin{pmatrix} h \\ \eta \end{pmatrix}  \in  X,
\end{equation}
$\wt\Phi''(\ol V)$ being a bounded linear operator of $X$.
\end{proposition}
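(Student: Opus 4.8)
The plan is to show that $\wt\Phi'$ has a G\^ateaux derivative at $\ol V$ given by \eqref{41} and that this derivative is a bounded linear operator on $X$. Since $\wt\Phi'(V)$ splits into three additive pieces according to the decomposition in \eqref{21} — a term $cP_m[a\ell'(v+f)]$, the linear term $-ckP_m\rho$ coming from $-ck(v+f)\rho$, and the term $k[\rho - cA_2^{-1}(v+f)]$ — I would treat each piece separately. The two pieces that are already affine in $V$, namely $-ckP_m\rho$ and $k[\rho - cA_2^{-1}(v+f)]$, are their own derivatives; the only genuine work is the first component, the Nemytskii-type term $cP_m[a\ell'(v+f)]$, whose G\^ateaux derivative in the direction $h$ should be $cP_m[a\ell''(\ol v+f)h] = cP_m\!\bigl[\tfrac{ah}{\ol v+f}\bigr]$, using that $\ell''(\xi) = (\log^\sim)'(\xi)$ equals $1/\xi$ on the range where $\ol v+f$ lives, which by \eqref{34} is bounded below by $\delta$.

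Concretely, fix a direction $H = {}^t(h,\eta) \in X$ and a scalar $t \ne 0$, and examine the difference quotient
\begin{equation*}
  \frac1t\bigl[\wt\Phi'(\ol V + tH) - \wt\Phi'(\ol V)\bigr]
  - \begin{pmatrix} cP_m\!\left[\tfrac{ah}{\ol v+f} - k\eta\right] \\ k[\eta - cA_2^{-1}h]\end{pmatrix}.
\end{equation*}
The second component is exactly zero for every $t$ since $\rho \mapsto k[\rho - cA_2^{-1}(v+f)]$ is affine, and likewise the $-ckP_m\rho$ contribution to the first component is exactly $-ckP_m\eta$. So the whole difference quotient reduces to
\begin{equation*}
  \left\|\, acP_m\!\left[\frac1t\bigl(\ell'(\ol v + th + f) - \ell'(\ol v+f)\bigr) - \frac{h}{\ol v+f}\right]\right\|_X,
\end{equation*}
and since $P_m$ is bounded on $L_{2,m}(I)$ and the second component of $\|\cdot\|_X$ vanishes here, it suffices to bound the $L_{2,m}(I)$-norm of the bracket. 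Writing $\ell'(\ol v+th+f) - \ell'(\ol v+f) = \int_0^1 \ell''(\ol v + \sigma th + f)\,th\,d\sigma$ and recalling $\ell''(\ol v + \sigma th + f) \to \ell''(\ol v+f) = (\ol v+f)^{-1}$ pointwise as $t \to 0$ (with $|\ell''| $ uniformly bounded by $\sup_{\xi}|\ell''(\xi)| < \infty$ from \eqref{22}), I would apply the dominated convergence theorem on $I$ — the integrand is dominated by $2\|\ell''\|_\infty |h| \in L_2(I)$ — to conclude the bracket tends to $0$ in $L_2(I)$, i.e. the G\^ateaux derivative exists and equals \eqref{41}.

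It then remains to check that $H \mapsto \wt\Phi''(\ol V)H$ is a bounded linear operator of $X$. Linearity is evident from the formula. For boundedness: in the first component, $\bigl\|cP_m[\tfrac{ah}{\ol v+f} - k\eta]\bigr\|_{L_{2,m}} \le C(\|h\|_{L_2}/\delta + \|\eta\|_{L_2}) \le C'\|H\|_X$ using $\ol v+f \ge \delta$ and the continuous embedding $H^1(I) \hookrightarrow L_2(I)$; in the second component, $\|k[\eta - cA_2^{-1}h]\|_{H^1} \le C(\|\eta\|_{H^1} + \|A_2^{-1}h\|_{H^1})$, and $A_2^{-1}\colon L_2(I) \to H^2_N(I) \subset H^1(I)$ is bounded (since $A_2$ is positive definite self-adjoint with $\cl D(A_2) = H^2_N(I)$), so this is $\le C'\|H\|_X$ as well. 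The main obstacle — though it is a mild one — is making the pointwise convergence argument for $\ell''$ clean: one needs the domination $|h| \in L_2(I)$ to run dominated convergence uniformly in the $\sigma$-integral, and one needs the a.e. convergence $\ell''(\ol v + \sigma th + f) \to (\ol v+f)^{-1}$, which follows from continuity of $\ell''$ together with the fact that $\ell''$ agrees with $\xi \mapsto 1/\xi$ on a neighborhood of the essential range of $\ol v+f$ (which sits in $[\delta,\infty)$) by \eqref{34} and the definition \eqref{22} of $\ell$. Everything else is a routine estimate.
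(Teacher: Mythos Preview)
Your proof is correct and follows essentially the same approach as the paper: reduce to the Nemytskii piece $v \mapsto \ell'(v+f)$, use the integral mean-value representation $\ell'(\ol v+th+f)-\ell'(\ol v+f)=\int_0^1 \ell''(\ol v+\sigma th+f)\,th\,d\sigma$, and invoke dominated convergence with the uniform bound $\sup_\xi|\ell''(\xi)|<\infty$ from \eqref{22}. The paper treats the boundedness of $\wt\Phi''(\ol V)$ as obvious, whereas you spell out the estimates explicitly, but the substance is the same.
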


\begin{proof}
In view of \eqref{21}, we see that it suffices to verify that the mapping $v \mapsto \ell'(v+f)$ from $L_{2,m}(I)$ into $L_2(I)$ is G\^ateaux differentiable at $\ol v$.
\par

To see this, fix $h \in L_{2,m}(I)$ and let $0 < \theta \le 1$ be a variable. Then,
\begin{equation*}
  \ell'(\ol v+f+\theta h)-\ell'(\ol v+f)-\ell''(\ol v+f)\theta h
  = \theta\int_0^1
      [\ell''(\ol v+f+\vartheta\theta h)-\ell''(\ol v+f)]h\,d\vartheta.
\end{equation*}
Therefore,
\begin{multline*}
  \|\theta^{-1}[\ell'(\ol v+f+\theta h)-\ell'(\ol v+f)]
     - \ell''(\ol u+f) h\|_{L_2}  \\
  \le \left\|\int_0^1 
      [\ell''(\ol v+f+\theta\vartheta h)-\ell''(\ol v+f)]d\vartheta \,
       h\right\|_{L_2}.
\end{multline*}
Here, since $\sup_{-\infty < \xi < \infty} |\ell''(\xi)| < \infty$ due to \eqref{22}, the dominate convergence theorem is applicable to conclude that, as $\theta \to 0$,
\begin{equation*}
  \left\|\int_0^1
    [\ell''(\ol v+f+\theta\vartheta h)-\ell''(\ol v+f)]d\vartheta \,
     h\right\|_{L_2} \to 0.
\end{equation*}
Therefore, $v \mapsto \ell'(v+f)$ is G\^ateaux differentiable at $\ol v$ and its derivative for the direction $h$ is given by $\ell''(\ol v+f)h$. It is also noticed by \eqref{22} that $\ell''(\ol v+f) = \frac1{\ol v+f}$.
\par

The operator expressed by \eqref{41} is obviously a bounded linear operator of $X$.
\end{proof}

It is impossible to show that the mapping $v \mapsto \ell'(v+f)$ is Fr\'echet differentiable from $L_{2,m}(I)$ into $L_2(I)$; but, it is certainly possible to see that the mapping is so from $\cl C_m(\ol I)$ into $\cl C(\ol I)$. This fact then leads us to introduce another underlying space
\begin{equation}  \label{20}
  Y = \left\{\begin{pmatrix}  v  \\  \rho  \end{pmatrix};\;
      v \in \cl C_m(\ol I) \quad\text{and}\quad
      \rho \in H^1(I)\right\},
\end{equation}
here $\cl C_m(\ol I)$ is a closed subspace of $\cl C(\ol I)$ consisting of zero-mean functions of $\cl C(\ol I)$. By \eqref{5} and \eqref{17}, we see that this Banach space $Y$ is such that $Z \subset Y \subset X$ and the intermediate inequality
\begin{equation}  \label{38}
  \|V\|_Y \le C\|V\|_Z^\kappa \|V\|_X^{1-\kappa}, \qquad V \in Z,
\end{equation}
is valid with some $0 < \kappa < 1$. In addition, it is obvious that
\begin{equation}  \label{44}
  \ol V \in \omega(V) \subset \ol B^Z(0; R) \subset Y.
\end{equation}

\begin{proposition}  \label{P7}
If $V \in Y$, then $\wt\Phi'(V) \in Y$. And the mapping $\wt\Phi'\,{:}\, Y \to Y$ is continuously Fr\'echet differentiable with the derivative
\begin{equation}  \label{49}
  \wt\Phi''(V)H = \begin{pmatrix}
    cP_m[a\ell''(v+f)h - k\eta]  \\
    k[\eta - cA_2^{-1}h]  \end{pmatrix},
  \qquad  V = \begin{pmatrix} v \\ \rho \end{pmatrix},\,
         H = \begin{pmatrix} h \\ \eta \end{pmatrix}  \in Y.
\end{equation}
\end{proposition}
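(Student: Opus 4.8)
The plan is to prove Proposition \ref{P7} by exploiting the fact that multiplication and Nemytskii-type operators behave much better on the algebra $\cl C(\ol I)$ than on $L_2(I)$. First I would establish that $V\in Y$ implies $\wt\Phi'(V)\in Y$: the only nontrivial component is the first one, and since $v\in\cl C_m(\ol I)$ and $\log^\sim$ is smooth with bounded derivatives by \eqref{22}, the composition $x\mapsto\ell'(v(x)+f)$ lies in $\cl C(\ol I)$; then $P_m[a\ell'(v+f)-k\rho]$ lies in $\cl C_m(\ol I)$ provided we first note $\rho\in H^1(I)\subset\cl C(\ol I)$, and the second component $k[\rho-cA_2^{-1}(v+f)]$ even lies in $H^2_N(I)\subset H^1(I)$ because $v+f\in\cl C(\ol I)\subset L_2(I)$. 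So $\wt\Phi'(V)\in Y$.

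Next I would verify Fr\'echet differentiability. By \eqref{21} the map $\wt\Phi'$ is affine in $\rho$ and in $A_2^{-1}(v+f)$, so those pieces contribute bounded linear terms directly; the entire nonlinearity is carried by $v\mapsto\ell'(v+f)$ viewed as a map $\cl C_m(\ol I)\to\cl C(\ol I)$. For this I would write, for $v,h\in\cl C_m(\ol I)$,
\begin{equation*}
  \ell'(v+h+f)-\ell'(v+f)-\ell''(v+f)h
  = \int_0^1[\ell''(v+\theta h+f)-\ell''(v+f)]h\,d\theta,
\end{equation*}
and estimate the $\cl C(\ol I)$-norm by $\|h\|_{\cl C}\sup_{x\in\ol I}\int_0^1|\ell''(v(x)+\theta h(x)+f)-\ell''(v(x)+f)|\,d\theta$. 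Since $\ell''$ is uniformly continuous on the compact range of values involved (indeed $\ell''$ is smooth with bounded derivatives, so it is Lipschitz on $\bb R$), this is bounded by $C\|h\|_{\cl C}^2 = o(\|h\|_{\cl C})$, giving Fr\'echet differentiability with derivative $h\mapsto\ell''(v+f)h$, i.e. the formula \eqref{49} after composing with the continuous projection $P_m\colon\cl C(\ol I)\to\cl C_m(\ol I)$ and the continuous maps into $H^1(I)$ and $H^2_N(I)$.

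Finally I would check that $V\mapsto\wt\Phi''(V)$ is continuous from $Y$ into the space of bounded linear operators on $Y$. The only term to control is $h\mapsto P_m[a\ell''(v+f)h]$; its operator norm difference at $v_1,v_2$ is dominated by $Ca\|\ell''(v_1+f)-\ell''(v_2+f)\|_{\cl C}$, which tends to $0$ as $v_2\to v_1$ in $\cl C_m(\ol I)$ because $\ell''$ is Lipschitz. Hence $\wt\Phi'\colon Y\to Y$ is continuously Fr\'echet differentiable with derivative \eqref{49}. The main obstacle, as the surrounding text emphasizes, is really conceptual rather than technical: one must recognize that the loss of Fr\'echet differentiability of the Nemytskii operator on $L_2(I)$ is repaired by passing to the sup-norm algebra $\cl C(\ol I)$, which is exactly why the intermediate space $Y$ with the interpolation inequality \eqref{38} has been introduced; once that is in place the estimates are the same mean-value and uniform-continuity arguments already used in the proof of Proposition \ref{P3}.
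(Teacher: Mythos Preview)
Your proof is correct and follows essentially the same route as the paper: both reduce the question to the Fr\'echet differentiability of the Nemytskii map $v\mapsto\ell'(v+f)$ from $\cl C_m(\ol I)$ to $\cl C(\ol I)$, use the identical integral remainder formula, and bound it by $C\|h\|_{\cl C}^2$ via the boundedness of $\ell'''$ (equivalently, Lipschitz continuity of $\ell''$). Your treatment is in fact slightly more explicit than the paper's in spelling out why $\wt\Phi'(V)\in Y$ (invoking $H^1(I)\subset\cl C(\ol I)$ for the $\rho$-term) and in checking continuity of $V\mapsto\wt\Phi''(V)$.
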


\begin{proof}
Since the projection $P_m$ given by \eqref{25} is a bounded linear operator from $\cl C(\ol I)$ into $\cl C_m(\ol I)$, it is easy to see that $V \in Y$ implies $\wt\Phi'(V) \in Y$. In addition, we see that it suffices to verify that the mapping $v \mapsto \ell'(v+f)$ from $\cl C_m(\ol I)$ into $\cl C(\ol I)$ is continuously Fr\'echet differentiable.
\par

To see this, let $v,\, h \in \cl C_m(\ol I)$. Then, the same calculations as in the proof of Proposition \ref{P6} yield the equality
\begin{equation*}
  \ell'(v+f+h)-\ell'(v+f)-\ell''(v+f)h
  = \int_0^1 [\ell''(v+f+\vartheta h)-\ell''(v+f)]h\,d\vartheta.
\end{equation*}
Because of $\sup_{-\infty<\xi<\infty} |\ell'''(\xi)| < \infty$, it follows that
\begin{equation*}
  \|\ell'(v+f+h)-\ell'(v+f)-\ell''(v+f)h\|_{\cl C}
  \le C\|h\|_{\cl C_m}^2,
\end{equation*}
which shows that $v \mapsto \ell'(v+f)$ is Fr\'echet differentiable and its derivative at $v$ is given by $h \mapsto \ell''(v+f)h$. Hence, \eqref{49} is obtained.
\par

Continuity of the mapping $\wt\Phi''\,{:}\, Y \to \cl L(Y)$ is also easily verified.
\end{proof}

\subsection{Some Properties of $\wt\Phi''(\ol V)$}

Put $L=\wt\Phi''(\ol V)$. By Propositions \ref{P6} and \ref{P7}, $L$ is a bounded linear operator of both $X$ and $Y$, i.e., $L \in \cl L(X) \cap \cl L(Y)$. In this subsection, we want to investigate other various properties of $L$.
\smallskip

\noindent
(I) $L$ is a symmetric operator of $X$. In fact, we have
\begin{multline*}
  (LV,W)_X = (cP_m[\tfrac{av}{\ol v+f}-k\rho],w)_{L_{2,m}}
             + (k[\rho-cA_2^{-1}v],\varrho)_{H^1},  \\
           V=\begin{pmatrix}v \\ \rho\end{pmatrix},\;
           W=\begin{pmatrix}w \\ \varrho \end{pmatrix} \in X.
\end{multline*}
Since $(P_m[\tfrac{av}{\ol v+f}],w)_{L_{2,m}} = (\tfrac{av}{\ol v+f},w)_{L_2} = (v,P_m[\tfrac{aw}{\ol v+f}])_{L_{2,m}}$ and since $(P_m\rho,w)_{L_{2,m}} = (\rho,w)_{L_2} \linebreak = (A_2^\frac12\rho,A_2^\frac12A_2^{-1}w)_{L_2} = (\rho,A_2^{-1}w)_{H^1}$ due to \eqref{4} and \eqref{23}, we verify that $(LV,W)_X = (V,LW)_X$ for any $V,\, W \in X$.
\smallskip

\noindent
(II) $L$ is a Fredholm operator of $X$. In fact, we have $LV = \begin{pmatrix} acP_m(\frac v{\ol v+f}) \\ k\rho \end{pmatrix} - ck \begin{pmatrix} P_m\rho \\ A_2^{-1}v \end{pmatrix}$ for $V = \begin{pmatrix} v \\ \rho \end{pmatrix} \in X$. Therefore, the result is obtained by the Riesz-Schauder theory (see, e.g. \cite[Section 6.2]{Bre} or \cite[Section 9-3]{Tan}) and the following lemma.

\begin{lemma} \label{L2}
The mapping $v \mapsto P_m(\frac v{\ol v+f})$ is a linear isomorphism from $L_{2,m}(I)$ onto itself.
\end{lemma}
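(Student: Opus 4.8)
The plan is to show the map $v \mapsto P_m(\frac{v}{\ol v+f})$ is bounded, injective, and surjective on $L_{2,m}(I)$, and then invoke the open mapping theorem. Boundedness is immediate: since $\ol v+f$ is continuous and bounded below by $\delta > 0$ on $\ol I$ by \eqref{34}, multiplication by $\frac1{\ol v+f}$ is a bounded operator on $L_2(I)$, and $P_m$ is the bounded orthogonal projection onto $L_{2,m}(I)$; hence the composition is bounded on $L_{2,m}(I)$.

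For injectivity, suppose $v \in L_{2,m}(I)$ with $P_m(\frac{v}{\ol v+f}) = 0$. This means $\frac{v}{\ol v+f}$ is a constant function, say $\frac{v}{\ol v+f} \equiv \gamma$, so $v = \gamma(\ol v+f)$. Integrating over $I$ and using $\int_I v\,dx = 0$ together with $\int_I(\ol v+f)\,dx = \int_I \ol u\,dx = \|u_0\|_{L_1} > 0$, we get $\gamma = 0$, hence $v = 0$. For surjectivity, I would first handle it on all of $L_2(I)$: given $g \in L_2(I)$, the natural candidate preimage is $v = (\ol v+f)(g - \lambda)$ for a suitable constant $\lambda$, chosen so that $\int_I v\,dx = 0$; since $\int_I(\ol v+f)\,dx > 0$, one can solve $\int_I(\ol v+f)(g-\lambda)\,dx = 0$ uniquely for $\lambda$, giving $v \in L_{2,m}(I)$. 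Then $\frac{v}{\ol v+f} = g - \lambda$, so $P_m(\frac{v}{\ol v+f}) = P_m g$. As $g \in L_2(I)$ ranges over all elements, $P_m g$ ranges over all of $L_{2,m}(I)$, establishing surjectivity onto $L_{2,m}(I)$.

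Having shown the map is a bounded bijection of the Banach space $L_{2,m}(I)$ onto itself, the bounded inverse theorem gives that its inverse is bounded as well, so it is a linear isomorphism. The only mild subtlety — and the point I would be most careful about — is keeping the bookkeeping of the zero-mean constraint straight: the operator acts on $L_{2,m}(I)$, multiplication by $(\ol v+f)^{\pm1}$ does not preserve zero mean, and $P_m$ must be reinserted at the right places; the constants $\gamma$ and $\lambda$ are precisely the corrections that make the argument close, and they are well-defined exactly because $\int_I(\ol v+f)\,dx$ is strictly positive, which is guaranteed by Proposition \ref{T2} and \eqref{34}.
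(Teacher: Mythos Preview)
Your proof is correct and follows essentially the same route as the paper. The paper simply writes down the explicit inverse $w \mapsto (\ol v+f)w - \big[\tfrac{\int_I(\ol v+f)w\,dx}{\int_I(\ol v+f)\,dx}\big](\ol v+f)$ and observes it is bounded; your surjectivity step produces exactly this formula, so the appeal to the open mapping theorem is unnecessary but harmless.
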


\begin{proof}
Put $\ol a(x)=\frac1{\ol v(x)+f}$. Then, since $(\|\ol v\|_{\cl C}+f)^{-1} \le \ol a(x) \le \delta^{-1}$ on $\ol I$ due to \eqref{34}, it is seen that $P_m[\ol a(x)v]=w$ if and only if $v = \ol a(x)^{-1}w -\left[\tfrac{\int_I \ol a(\xi)^{-1}w(\xi)d\xi} {\int_I \ol a(\xi)^{-1}d\xi}\right]\ol a(x)^{-1}$ for $v,\, w \in L_{2,m}(I)$. This means that $v \mapsto P_m[\ol a(x)v]$ admits a bounded inverse on $L_{2,m}(I)$.
\end{proof}

\noindent
(III) $L$ enjoys the following range condition with respect to $Y$:
\begin{equation}  \label{50}
  \text{if $LV \in Y$ for $V \in X$, then actually $V \in Y$,
        i.e., $L^{-1}(Y) \subset Y$.}
\end{equation}
By the similar argument as in the proof of Lemma \ref{L2}, we see that $P_m[\frac v{\ol v+f}] \in \cl C_m(\ol I)$ implies $v \in \cl C_m(\ol I)$. From this fact, the proof of \eqref{50} is quite easy.
\smallskip

\noindent
(IV) $L$ induces an orthogonal decomposition of $X$ into the form $X = \cl K(L) + L(X)$. In fact, as $L$ is a Fredholm operator of $X$, the kernel $\cl K(L)$ is of finite dimension, say ${\rm dim}\,\cl K(L) = N$, and the range $L(X)$ is a closed subspace of $X$ having a finite codimension. Let $P\,{:}\, X \to \cl K(L)$ be the orthogonal projection from $X$ onto $\cl K(L)$. Then, $X$ is orthogonally decomposed into $X = PX + (1-P)X$, $PX$ coinciding with $\cl K(L)$. We here observe that $(1-P)X = L(X)$. Indeed, let $V \in L(X)$, i.e., $V=LW$; then, since
\begin{equation} \label{53}
   LP = 0  \enskip\text{and}\enskip  PL = P^*L^* = (LP)^* = 0,
\end{equation}
it follows that $PV = PLW=0$; hence, $V = (1-P)V \in (1-P)X$, that is, $L(X) \subset (1-P)X$. Assume that $L(X) \varsubsetneqq (1-P)X$; then, there exists $0 \not= V \in (1-P)X$ which is orthogonal to $\ol{L(X)}=L(X)$; since $0=(V,LW)_X = (LV,W)_X$ for any $W \in X$, $LV$ must be $0$, i.e., $V \in PX$; hence, $V \in PX \cap (1-P)X = \{0\}$ which contradicts the assumption $V \not=0$.
\par

In particular, $L$ is an isomorphism from $L(X)$ onto itself.
\smallskip

\noindent
(V) $L$ equally induces a topological decomposition of $Y$ into the form $Y = \cl K(L) + L(Y)$. First, let us notice from \eqref{50} that $LV=0$ for $V \in X$ implies $V \in Y$, i.e., $\cl K(L) \subset Y$. Next, as $\cl K(L)$ is of finite dimension, the induced topology of $\cl K(L)$ from $Y$ is equivalent to that from $X$. In other words, the operator $P\,{:}\, Y \to \cl K(L)$ is continuous with respect to the $Y$-norm. As $P$ keeps the property $P^2 = P$ on $Y$, $P$ can then induce a topological decomposition of $Y$, too, into $Y = PY + (1-P)Y$, $PY$ coinciding with $\cl K(L)$. We here observe that $(1-P)Y = L(Y)$. Indeed, if $V \in (1-P)Y\, (\subset (1-P)X)$, then $V \in L(X)$ and $V = LW$ with some $W \in X$; but, \eqref{50} gives that $W \in Y$ and $V \in L(Y)$. Conversely, if $V \in L(Y)\, (\subset L(X))$, then $V = (1-P)W$ with some $W \in X$; but, $W = V+PW \in Y$ (due to $L \in \cl L(Y)$); hence, $V \in (1-P)Y$. In particular, $L(Y)$ is a closed subspace of $Y$.

\begin{proposition}  \label{P8}
The operator $L$ is an isomorphism from $L(Y)$ onto itself.
\end{proposition}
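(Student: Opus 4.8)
The plan is to exploit the structural facts about $L$ established in items (I)--(V) above, namely that $L\in\cl L(X)\cap\cl L(Y)$, that $L$ is a Fredholm and symmetric operator of $X$ giving the orthogonal decomposition $X=\cl K(L)\oplus L(X)$ with $L$ an isomorphism from $L(X)$ onto itself, and that the same projection $P$ induces the \emph{topological} splitting $Y=\cl K(L)\oplus L(Y)$ with $L(Y)$ closed in $Y$. From these, the statement reduces to two things: injectivity of $L$ restricted to $L(Y)$, and surjectivity together with boundedness of the inverse, both in the $Y$-topology.

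First I would check injectivity: if $V\in L(Y)$ and $LV=0$, then $V\in\cl K(L)=PY$; but $V\in L(Y)=(1-P)Y$, so $V\in PY\cap(1-P)Y=\{0\}$. This uses only that $Y=PY\oplus(1-P)Y$ is a genuine topological direct sum, which was verified in (V). Next, for surjectivity, let $W\in L(Y)$ be arbitrary. Since $L(Y)\subset L(X)$ and $L$ is onto $L(X)$ (item (IV)), there is $V\in L(X)$ with $LV=W$. Now $LV=W\in Y$, so the range condition \eqref{50} gives $V\in Y$; and $V=(1-P)V$ because $V\in L(X)=(1-P)X$ and $P$ agrees on $X$ and $Y$, hence $V\in(1-P)Y=L(Y)$. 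Thus $L\colon L(Y)\to L(Y)$ is a bijection.

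Finally I would upgrade this algebraic bijection to a topological isomorphism. The cleanest route is the open mapping theorem: $L(Y)$ is a closed subspace of the Banach space $Y$, hence itself a Banach space; $L$ restricted to $L(Y)$ is bounded (as $L\in\cl L(Y)$) and bijective onto $L(Y)$; therefore its inverse is bounded. Alternatively, one can obtain the bounded inverse directly by combining the bounded inverse $L^{-1}$ on $L(X)$ (item (IV), open mapping theorem on the Banach space $L(X)$) with the closed graph / range condition \eqref{50} to see that $L^{-1}$ maps $L(Y)$ into $Y$ continuously; but invoking the open mapping theorem once on $L(Y)$ is shorter and self-contained.

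The only real subtlety --- and the step I would be most careful about --- is making sure the splitting of $Y$ is used correctly: one must know that $\cl K(L)$ carries the same topology as a subspace of $X$ and of $Y$ (true because it is finite-dimensional), that $P$ is $Y$-continuous, and that $L(Y)$ is genuinely \emph{closed} in $Y$; all of this is exactly the content of item (V), so no new work is needed. With that in hand the proof is short: injectivity from the direct-sum decomposition, surjectivity from item (IV) plus the range condition \eqref{50}, and continuity of the inverse from the open mapping theorem applied to the Banach space $L(Y)$.
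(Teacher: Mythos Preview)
Your proposal is correct and takes essentially the same approach as the paper: the paper's proof simply notes that $L(Y)$ is a Banach subspace of $Y$ and that $L$ is continuous and bijective from $L(Y)$ onto itself (facts it regards as already established in item (V)), and then invokes the open mapping theorem to conclude that the inverse is continuous. Your write-up is more explicit in spelling out injectivity and surjectivity; note that for surjectivity you can argue even more directly from (V) alone via $L(Y)=L(\cl K(L)\oplus L(Y))=L(L(Y))$, but your route through (IV) and the range condition \eqref{50} is equally valid.
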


\begin{proof}
We already know that $L(Y)$ is a Banach subspace of $Y$ and that $L$ is continuous and bijective from $L(Y)$ onto itself. Then, by virtue of a corollary of the open mapping theorem (see \cite[p.\,77]{Yos}), we know that its inverse is also continuous.
\end{proof}

\subsection{Critical Manifold}

Following the idea due to Chill \cite{Ch}, we here introduce the critical manifold of $\wt\Phi(V)$ but is handled in the underlying space $Y$. Set a surface $S$ in $Y$ by
\begin{equation}  \label{42}
  S = \{V \in Y;\; \wt\Phi'(V) \in \cl K(L)\}
    = \{V \in Y;\; (1-P)\wt\Phi'(V)=0\}.
\end{equation}
By Critical Condition and \eqref{44} we know that $\wt\Phi'(\ol V) = 0$ with $\ol V \in Y$, thereby $\ol V$ certainly lies on $S$.
\par

Then, in a neighborhood of $\ol V$, $S$ is seen to be a $\cl C^1$-manifold of dimension $N (= {\rm dim}\, \cl K(L)$). Indeed, identifying $Y$ with the product space $\cl K(L)\times L(Y)$, set an operator $G\,{:}\, Y \to L(Y)$ such that $G(V^0,V^1) = (1-P)\wt\Phi'(V^0+V^1)$ for $(V^0,V^1) \in Y$ and consider the equation $G(V^0,V^1)=0$ for $(V^0,V^1)$. According to Propositions \ref{P7}, we have $G_{V^1}(V^0,V^1) = (1-P)\wt\Phi''(V^0+V^1)_{|L(Y)}$. Therefore, Proposition \ref{P8} shows that $G_{V^1}(P\ol V,(1-P)\ol V) = L_{|L(Y)}$ is an isomorphism of $L(Y)$. So, the implicit function theorem is applicable to $G(V^0,V^1)=0$ to conclude that in some neighborhood $\cl U(\ol V) = \cl U^0(P\ol V) \times \cl U^1((1-P)\ol V)$ of $\ol V$, where $\cl U^0(P\ol V)$ (resp. $\cl U^1((1-P)\ol V)$) is a neighborhood of $P\ol V$ (resp. $(1-P)\ol V$) in the space $\cl K(L)$ (resp. $L(Y)$), the unique solution to $G(V^0,V^1)=0$ is given by $(V^0,g(V^0))$, where $g\,{:}\, \cl U(P\ol V) \to \cl U((1-P)\ol V)$ is a $\cl C^1$ mapping such that $g(P\ol V) = (1-P)\ol V$. As a consequence, $S$ is seen to be represented as
\begin{equation*}
  S \cap \cl U(\ol V)
  = \{V^0+g(V^0);\; V^0 \in \cl U^0(P\ol V)
    \enskip\text{and}\enskip 
    g\,{:}\, \cl U^0(P\ol V) \to L(Y)\}.
\end{equation*}
Moreover, let $V^0_1,\ldots,V^0_N$ be an orthogonal basis of $\cl K(L)$ and introduce a coordinate to $\cl K(L)$ by the correspondence
\begin{equation*}
  V^0 = \xi_1V^0_1+\cdots+\xi_NV^0_N \in \cl K(L) \quad  \longleftrightarrow 
      \quad  \bs\xi = (\xi_1,\ldots,\xi_N) \in \bb R^N.
\end{equation*}
Especially, denote $P\ol V = \sum_{n=1}^N \ol\xi_nV^0_n \leftrightarrow \ol{\bs\xi}=(\ol\xi_1,\ldots,\ol\xi_N)$. Let $\Omega \subset \bb R^N$ be the region corresponding to $\cl U^0(P\ol V)$. Then, we can define to $S \cap \cl U(\ol V)$ a coordinate by the relation
\begin{equation*}
  V^S = \textstyle\sum_{n=1}^N \xi_nV^0_n
     + g\left(\textstyle\sum_{n=1}^N \xi_nV^0_n\right) \in S \cap \cl U(\ol V)
  \quad  \longleftrightarrow  \quad  \bs\xi = (\xi_1,\ldots,\xi_N) \in \Omega.
\end{equation*}
\par

We often identify the vector $V^0 \in \cl U^0(P\ol V)$ with its coordinate $\bs\xi \in \Omega$. Then, $g$ is considered to be a $\cl C^1$ function for $\bs\xi \in \Omega$ with values in $L(Y)$.
\par

Furthermore, we can verify that $\wt\Phi(V)$ is analytic in a neighborhood of $\ol V$ on $S$.

\begin{proposition}  \label{P10}
The function $\bs\xi \mapsto \wt\Phi\big(\bs\xi + g(\bs\xi)\big)$ is analytic in $\Omega$ if $\Omega$ is replaced by a sufficiently smaller one.
\end{proposition}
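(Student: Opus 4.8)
The plan is to reduce the analyticity of $\bs\xi \mapsto \wt\Phi(\bs\xi + g(\bs\xi))$ to the analyticity of two ingredients: the finite-dimensional map $\bs\xi \mapsto V^S = \sum_n \xi_n V^0_n + g(\bs\xi)$ as a map from $\Omega$ into $Y$, and the restriction of $\wt\Phi$ itself to a neighborhood of $\ol V$ in $Y$. First I would establish that $\wt\Phi \colon Y \to \bb R$ is real-analytic near $\ol V$. This rests on the fact, already exploited in Section 6.1, that on $Y$ the only genuinely nonlinear term is $v \mapsto \int_I ac\,\ell(v+f)\,dx$, while the remaining terms $\frac{bk}2\|\rho'\|_{L_2}^2 + \frac{dk}2\|\rho\|_{L_2}^2 - ck\int_I (v+f)\rho\,dx$ are quadratic (hence entire) in $V$. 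For the term $\int_I \ell(v+f)\,dx$ with $v$ ranging over a small $\cl C(\ol I)$-ball around $\ol v$: since $\ol v + f \ge \delta$ on $\ol I$ by \eqref{34}, for $\|v-\ol v\|_{\cl C}$ small one has $v+f \ge \frac\delta2$, so on this range $\log^\sim$ coincides with the genuine $\log$, which is real-analytic on $[\frac\delta2,\infty)$. Expanding $\ell(v+f) = \ell(\ol v + f) + \sum_{j\ge 1} \frac{\ell^{(j)}(\ol v+f)}{j!}(v-\ol v)^j$ pointwise and integrating, one gets a power series in $(v-\ol v) \in \cl C_m(\ol I)$ whose $j$-th term is the bounded $j$-linear form $h \mapsto \frac1{j!}\int_I \ell^{(j)}(\ol v+f)\,h^j\,dx$ with norm $\le \frac1{j!}\|\ell^{(j)}(\ol v+f)\|_{\cl C}\,|I|$; since $\ell^{(j)}(\xi) = \frac{d^{j-2}}{d\xi^{j-2}}\xi^{-1}$ has $\cl C$-norm on $[\frac\delta2,\infty)$ bounded by $C\,(j-2)!\,(2/\delta)^{j-1}$, the series has a positive radius of convergence in $\cl C_m(\ol I)$. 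This proves $\wt\Phi$ is analytic near $\ol V$ in the $Y$-topology.

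Next I would verify that $\bs\xi \mapsto V^S(\bs\xi)$ is real-analytic from $\Omega$ (shrunk if necessary) into $Y$. Since $\bs\xi \mapsto \sum_n \xi_n V^0_n$ is linear, this amounts to the analyticity of $g$. Recall $g$ was produced by the implicit function theorem applied to $G(V^0,V^1) = (1-P)\wt\Phi'(V^0+V^1) = 0$ with $G_{V^1}(P\ol V,(1-P)\ol V) = L_{|L(Y)}$ an isomorphism of $L(Y)$ (Proposition \ref{P8}). The plan is to invoke the \emph{analytic} implicit function theorem in Banach spaces: this requires $G$ to be analytic near $(P\ol V,(1-P)\ol V)$, which follows because $\wt\Phi' \colon Y \to Y$ is analytic there — indeed $\wt\Phi'(V) = {}^t\!\big(cP_m[a\ell'(v+f)-k\rho],\,k[\rho - cA_2^{-1}(v+f)]\big)$ by \eqref{21}, and the only nonlinear piece $v \mapsto \ell'(v+f)$ from $\cl C_m(\ol I)$ to $\cl C(\ol I)$ is analytic near $\ol v$ by exactly the same pointwise-power-series-plus-norm-estimate argument as above (now with $\ell'$ in place of $\ell$, using $\sup|\ell^{(j+1)}| < \infty$ type bounds on $[\frac\delta2,\infty)$); composing with the bounded linear operators $P_m$ and $A_2^{-1}$ preserves analyticity. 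Hence $G$ is analytic, $G_{V^1}$ at the base point is invertible, and the analytic implicit function theorem yields that $g$ is analytic on a possibly smaller neighborhood $\cl U^0(P\ol V)$; correspondingly $\Omega$ is replaced by a smaller region, as the statement permits.

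Finally, composing: $\bs\xi \mapsto V^S(\bs\xi)$ is analytic from $\Omega$ into $Y$, its image lies in a small $Y$-neighborhood of $\ol V$ (again after shrinking $\Omega$), on which $\wt\Phi$ is analytic, and the composition of analytic maps between Banach spaces (here $\bb R^N \to Y \to \bb R$) is analytic; therefore $\bs\xi \mapsto \wt\Phi(\bs\xi + g(\bs\xi))$ is analytic on $\Omega$. The main obstacle I anticipate is not the composition step but the careful bookkeeping of the coefficient growth $\|\ell^{(j)}(\ol v+f)\|_{\cl C} \le C\,(j-2)!\,(2/\delta)^{j-1}$ and its $\ell'$-analogue, which is what makes the pointwise Taylor series of $\ell$ (respectively $\ell'$) about $\ol v+f$ converge \emph{as a power series in the Banach space} $\cl C_m(\ol I)$ rather than merely pointwise; once this estimate is in hand everything else is a routine application of the analytic implicit function theorem and of the stability of analyticity under composition. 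One should also take care that all shrinkings of $\Omega$ are compatible, so that on the final $\Omega$ simultaneously $g$ is analytic, $V^S(\Omega)$ sits inside the $Y$-ball of analyticity of $\wt\Phi$, and $v^S + f \ge \frac\delta2$ so that $\log^\sim = \log$ throughout.
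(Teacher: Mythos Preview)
Your proposal is correct and complete, but it takes a genuinely different route from the paper's own proof. You argue \emph{real-analytically}: you establish directly, via explicit coefficient estimates $\|\ell^{(j)}(\ol v+f)\|_{\cl C} \le C(j-2)!\,\delta^{-(j-1)}$, that $\wt\Phi$ and $\wt\Phi'$ are real-analytic as maps on $Y$ near $\ol V$, then invoke the analytic implicit function theorem in Banach spaces to deduce that $g$ is real-analytic, and conclude by composition. The paper instead proceeds by \emph{complexification}: it extends $X$, $Y$, $\wt\Phi$, $\wt\Phi'$, $L$, $P$ and the critical-manifold construction to complex versions $X_{\bb C}$, $Y_{\bb C}$, $\wt\Phi_{\bb C}$, $[\wt\Phi']_{\bb C}$, $L_{\bb C}$, $S_{\bb C}$, applies the (ordinary, i.e.\ $\cl C^1$) implicit function theorem in the complex Banach space $Y_{\bb C}$ to obtain a complex-$\cl C^1$ map $g_{\bb C}$, and then observes that the composed scalar function $\bs\xi \mapsto \wt\Phi_{\bb C}(\bs\xi + g_{\bb C}(\bs\xi))$ is continuously complex-differentiable on a region of $\bb C^N$, whence analytic by the classical several-complex-variables criterion (cited from Dieudonn\'e and H\"ormander). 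Your approach is more self-contained and avoids setting up the complexified functional-analytic apparatus, at the price of the explicit factorial bookkeeping you flagged; the paper's approach trades those estimates for the structural fact that complex Fr\'echet differentiability already forces analyticity, so it never needs to bound Taylor coefficients or cite an analytic implicit function theorem.
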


\begin{proof}
We will employ the methods of complexification by extending every thing concerning the manifold $S$ to the complex one in a suitable complex neighborhood of $\ol V$.
\par

The underlying space is extended to $X_\bb C = X + iX$. It is similar for $Y$, i.e., $Y_\bb C = Y+iY$. The derivation $\wt\Phi'(V)$ given by \eqref{21} can naturally be extended for complex $V$ as follows. The operator $A_2$ can be a realization of $-b\frac{d^2}{dx^2}+d$ under the homogeneous Neumann boundary conditions in the complex space $L_2(I; \bb C)$ and $A_2$ becomes a positive definite self-adjoint operator of $L_2(I; \bb C)$. The logarithmic function can be extended as an analytic function in the complex domain  $\bb C-(-\infty,\frac\delta2]$. So, on account of \eqref{34}, if $r_{\bb C} > 0$ is sufficiently small, then $\log(v+f)$ is defined for any $v \in \cl C_m(\ol I; \bb C)$ such that $\|v-\ol v\|_{\cl C} < r_{\bb C}$. Therefore, in view of \eqref{22}, the operator $\wt\Phi'(V)$ can be extended as
\begin{equation*}
  [\wt\Phi']_{\bb C}(V) = \begin{pmatrix} cP_m[a\log(v+f)-k\rho]  \\
                k[\rho-cA_2^{-1}(v+f)] \end{pmatrix},
  \qquad V = \begin{pmatrix} v \\ \rho \end{pmatrix}
           \in B^{Y_\bb C}(\ol V; r_{\bb C}).
\end{equation*}
\par

By the same proof as for Proposition \ref{P7}, the mapping $[\wt\Phi']_{\bb C}\,{:}\, B^{Y_{\bb C}}(\ol V;r_{\bb C}) \to Y_{\bb C}$ is seen to be Fr\'echet differentiable with the derivative $[\wt\Phi']_{\bb C}'(V)$ which takes the same form as $\wt\Phi''(V)$ expressed by \eqref{49}. Put $L_{\bb C}= [\wt\Phi']_{\bb C}'(\ol V)$. Since $L_{\bb C}$ takes the same form as $\wt\Phi''(\ol V)$ expressed by \eqref{41}, $L_{\bb C}$ is a bounded linear operator of $X_{\bb C}$ which can be given by $L_{\bb C}V = L({\rm Re}\,V)+iL({\rm Im}\,V)$ for $V \in X_{\bb C}$, which means that $L_{\bb C}$ is a real operator of $X_{\bb C}$ and can be identified with $L$. Therefore, $L$ is a symmetric Fredholm operator of $X_{\bb C}$, too. In addition, since $V \in \cl K(L)$ if and only if ${\rm Re}\,V,\; {\rm Im}\,V \in \cl K(L)$, the basis $V_1^0,\ldots,V_N^0$ fixed above in the real $\cl K(L)$ can span the complex $\cl K(L)$ in $X_{\bb C}$.
\par

Meanwhile, let $P_{\bb C}$ be the orthogonal projection from $X_{\bb C}$ onto $\cl K(L)$. Similarly, we have $P_{\bb C}V=P({\rm Re}\,V)+iP({\rm Im}\,V)$ for $V \in X_{\bb C}$, which means that $P_{\bb C}$ is also a real operator of $X_{\bb C}$ and can be identified with $P$. Therefore, $X_{\bb C}$ is orthogonally decomposed into $X_{\bb C}=PX_{\bb C}+(1-P)X_{\bb C}$, where $PX_{\bb C}=\cl K(L)$ and $(1-P)X_{\bb C}=L(X_{\bb C})$. As the similar range condition to \eqref{50} is satisfied by $L$ in $Y_{\bb C}$, this decomposition induces the topological decomposition of $Y_{\bb C}$ into $Y_{\bb C}=PY_{\bb C}+(1-P)Y_{\bb C}$, where $PY_{\bb C}=\cl K(L)$ and $(1-P)Y_{\bb C}=L(Y_{\bb C})$. By the same proof as for Proposition \ref{P8}, we know that $L$ is an isomorphism from $L(Y_{\bb C})$ onto itself.
\par

The critical manifold in $Y_\bb C$ is now defined by
\begin{equation*}
  S_\bb C = \{V \in B^{Y_\bb C}(\ol V; r_{\bb C});\;
            (1-P)[\wt\Phi']_{\bb C}(V)=0\}.
\end{equation*}
The implicit function theorem of complex version provides that, in some neighborhood $\cl U_{\bb C}(\ol V) = \cl U^0_\bb C(P\ol V) \times \cl U^1_\bb C((1-P)\ol V)$, where $\cl U^0_\bb C(P\ol V)$ (resp. $\cl U^1_\bb C((1-P)\ol V)$) is a neighborhood of $P\ol V$ (resp. $(1-P)\ol V$) in the space $\cl K(L)$ (resp. $L(Y_\bb C)$), the vector of $S_\bb C \cap \cl U_\bb C(\ol V)$ is represented by $V^0+g_\bb C(V^0)$, where $V^0 \in \cl U^0_\bb C(P\ol V)$ and $g_\bb C\,{:}\, \cl U^0_\bb C(P\ol V) \to L(Y_\bb C)$ is a $\cl C^1$ mapping. Moreover, using the complex coordinate $V^0 = \xi_1V^0_1+\cdots+\xi_NV^0_N \in \cl K(L) \leftrightarrow  \bs\xi = (\xi_1,\ldots,\xi_N) \in \bb C^N$, the vectors $V^S \in S_\bb C \cap \cl U_\bb C(\ol V)$ are expressed as $V^S = \sum_{n=1}^N \xi_nV_n^0 + g_\bb C(\sum_{n=1}^N \xi_nV_n^0) \leftrightarrow \bs\xi = (\xi_1,\ldots,\xi_n) \in \Omega_\bb C$, $\Omega_\bb C$ being a region of $\bb C^N$ which corresponds to $\cl U_\bb C(\ol V)$. Therefore, $g_\bb C$ is a $\cl C^1$ function defined in $\Omega_\bb C$ with values in $L(Y_\bb C)$.
\par

Finally, we complexificate the Lyapunov function $\wt\Phi(V)$ as
\begin{multline*}
  \wt\Phi_\bb C(V)
  = \int_I \Bigg\{ac[(v+f)\log(v+f)-(v+f)]
      + \frac{bk}2(\rho')^2  \\
      + \frac{dk}2\rho^2 - ck(v+f)\rho\Bigg\}dx,
  \qquad V \in B^{Y_\bb C}(\ol V; r_\bb C).
\end{multline*}
By the same arguments as in the proof of Proposition \ref{P3}, $\wt\Phi_\bb C{:}\, B^{Y_\bb C}(\ol V; r_\bb C) \to \bb C$ is continuously Fr\'echet differentiable. Therefore, the composed function $\bs\xi \mapsto V^S \mapsto \wt\Phi_\bb C(V^S)$ is continuously differentiable in $\Omega_\bb C$. Let us here recall the characterization of analytic functions of several complex variables. According to \cite[(9.10.1)]{Di} or \cite[Theorem 2.2.8]{Hor}, a continuously differentiable function from a region of $\bb C^N$ is an analytic function.
\par

In this way, we have established the desired analyticity.
\end{proof}

\subsection{Verification of \eqref{28}}

It is now ready to prove that \eqref{28} is fulfilled.
\par

We use a decomposition of the vectors $V \in \cl U(\ol V)$ given by the form $V=V^S+V^1$, where $V^S \in S$ and $V^1 \in L(Y)$. In fact, express $V$ as
\begin{equation*}
  V = [PV+g(PV)] + [(1-P)V-g(PV)] \equiv V^S + V^1,
  \qquad  V \in \cl U(\ol V),
\end{equation*}
where $PV+g(PV)=(PV,g(PV)) \in S$ and $(1-P)V-g(PV) \in L(Y)$. It is clear that
\begin{equation}  \label{46}
  \text{as}\enskip V \to \ol V \enskip\text{in}\enskip Y,\quad V^S \to \ol V
  \enskip\text{and}\enskip  V^1 \to 0 \enskip\text{in}\enskip Y.
\end{equation}
\par

This decomposition yields the following important estimate for $\wt\Phi'(V)$.

\begin{proposition}
It holds that
\begin{gather}  \label{47}
  \|P\wt\Phi'(V)\|_Y \ge \|\wt\Phi'(V^S)\|_Y - o(1)\|V^1\|_Y,
      \qquad  V = V^S + V^1 \in \cl U(\ol V),  \\  \label{52}
  \|(1-P)\wt\Phi'(V)\|_Y \ge c\|V^1\|_Y,
      \qquad  V = V^S + V^1 \in \cl U(\ol V),
\end{gather}
if $\cl U(\ol V)$ is replaced by a sufficiently smaller one. Here, $o(1)$ stands for a small quantity tending to $0$ as $V \to \ol V$ in $Y$ while $c$ is a fixed positive constant.
\end{proposition}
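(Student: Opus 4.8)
The plan is to prove the two inequalities \eqref{47} and \eqref{52} separately, exploiting the decomposition $V = V^S + V^1$ together with the Fr\'echet differentiability of $\wt\Phi'$ on $Y$ (Proposition \ref{P7}) and the fact that $L = \wt\Phi''(\ol V)$ is an isomorphism from $L(Y)$ onto itself (Proposition \ref{P8}). Throughout I will shrink $\cl U(\ol V)$ as needed so that $V^S$ stays near $\ol V$ on $S$ and $\|V^1\|_Y$ is small, using \eqref{46}.

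For \eqref{52}, the point is that on the critical manifold $S$ one has $(1-P)\wt\Phi'(V^S) = 0$ by the very definition \eqref{42} of $S$. Hence I would write
\begin{equation*}
  (1-P)\wt\Phi'(V) = (1-P)\wt\Phi'(V^S+V^1) - (1-P)\wt\Phi'(V^S)
  = (1-P)\big[\wt\Phi''(V^S)V^1 + o(\|V^1\|_Y)\big],
\end{equation*}
using the differentiability of $\wt\Phi'$ on $Y$ and the first-order Taylor expansion at $V^S$. Since $V^S \to \ol V$ in $Y$ as $V \to \ol V$, the continuity of $\wt\Phi''\colon Y \to \cl L(Y)$ gives $(1-P)\wt\Phi''(V^S)_{|L(Y)} \to (1-P)L_{|L(Y)} = L_{|L(Y)}$ in operator norm (recall $PL = 0$ from \eqref{53}, so $(1-P)L = L$ on $L(Y)$). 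Because $L_{|L(Y)}$ is an isomorphism of $L(Y)$, it is bounded below there; by a standard perturbation argument the nearby operators $(1-P)\wt\Phi''(V^S)_{|L(Y)}$ are uniformly bounded below on $L(Y)$ once $\cl U(\ol V)$ is small enough. Absorbing the $o(\|V^1\|_Y)$ term then yields $\|(1-P)\wt\Phi'(V)\|_Y \ge c\|V^1\|_Y$ with a fixed $c>0$.

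For \eqref{47}, I would similarly expand $P\wt\Phi'(V) = P\wt\Phi'(V^S) + P\big[\wt\Phi''(V^S)V^1 + o(\|V^1\|_Y)\big]$. Here, however, $P\wt\Phi'(V^S) = \wt\Phi'(V^S)$ because on $S$ we have $(1-P)\wt\Phi'(V^S) = 0$. The correction term $P\wt\Phi''(V^S)V^1$ is bounded in $Y$-norm by $\|P\|_{\cl L(Y)}\,\|\wt\Phi''(V^S)\|_{\cl L(Y)}\,\|V^1\|_Y$, and since $\wt\Phi''(V^S) \to L$ while $PL = 0$ on $L(Y)$ (again by \eqref{53}), we get $\|P\wt\Phi''(V^S)V^1\|_Y \le o(1)\|V^1\|_Y$ as $V \to \ol V$; the genuine $o(\|V^1\|_Y)$ remainder is likewise absorbed. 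The triangle inequality then gives $\|P\wt\Phi'(V)\|_Y \ge \|\wt\Phi'(V^S)\|_Y - o(1)\|V^1\|_Y$, which is \eqref{47}.

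The main obstacle I anticipate is making the perturbation estimate for the lower bound in \eqref{52} rigorous and uniform: one must check that the continuity of $\wt\Phi''$ from $Y$ into $\cl L(Y)$ (Proposition \ref{P7}) indeed delivers convergence of $(1-P)\wt\Phi''(V^S)_{|L(Y)}$ to $L_{|L(Y)}$ in the operator norm of $\cl L(L(Y))$ uniformly as $V^S$ ranges over a small $Y$-neighborhood of $\ol V$ on $S$, and that the $o(\|V^1\|_Y)$ term from the Taylor expansion is uniform in $V^S$ as well. This requires care because $V^S$ itself moves; a clean way is to use the $\cl C^1$ regularity of $g$ to control $V^S$ in terms of $PV$, and then invoke uniform continuity of $\wt\Phi''$ on the (relatively compact in the relevant sense, or simply bounded) neighborhood. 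The bound-below-plus-small-perturbation step for $L_{|L(Y)}$ is otherwise routine once this uniformity is in place.
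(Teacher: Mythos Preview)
Your proposal is correct and follows essentially the same route as the paper: Taylor-expand $\wt\Phi'(V)$ at $V^S$, use $P\wt\Phi'(V^S)=\wt\Phi'(V^S)$ and $PL=0$ (from \eqref{53}) for \eqref{47}, and use $(1-P)\wt\Phi'(V^S)=0$ together with the isomorphism $L_{|L(Y)}$ (Proposition \ref{P8}) for \eqref{52}. Your worry about uniformity of the $o(\|V^1\|_Y)$ remainder is easily dispatched by writing $r(V^1)=\int_0^1[\wt\Phi''(V^S+tV^1)-\wt\Phi''(V^S)]V^1\,dt$ and invoking continuity of $\wt\Phi''$ at the single point $\ol V$, since both $V^S$ and $V^S+tV^1$ lie in an arbitrarily small $Y$-ball about $\ol V$ once $\cl U(\ol V)$ is shrunk.
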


\begin{proof}
We have
\begin{align*}
  \wt\Phi'(V) &= \wt\Phi'(V^S) + \wt\Phi''(V^S)(V-V^S) + r(V-V^S)  \\
    &= \wt\Phi'(V^S) + [\wt\Phi''(V^S)-\wt\Phi''(\ol V)]V^1
       + \wt\Phi''(\ol V)V^1 + r(V^1),
\end{align*}
where $\|r(V^1)\|_Y = o(\|V^1\|_Y)$. As $P\wt\Phi'(V^S)= \wt\Phi'(V^S)$ and $P\wt\Phi''(\ol V)=0$ due to \eqref{53}, we observe that
\begin{equation*}
  P\wt\Phi'(V) = \wt\Phi'(V^S)
     + P[\wt\Phi''(V^S)-\wt\Phi''(\ol V)]V^1 + Pr(V^1).
\end{equation*}
Therefore,
\begin{equation*}
  \|P\wt\Phi'(V)\|_Y \ge \|\wt\Phi'(V^S)\|_Y - o(1)\|V^1\|_Y - o(\|V^1\|_Y),
\end{equation*}
where $o(1) = \|\wt\Phi''(V^S)-\wt\Phi''(\ol V)\|_{\cl L(Y)}$ tends to $0$ as $V \to \ol V$ in $Y$. Hence, \eqref{47} is observed.
\par

On the other hand, by the same reasons, we have
\begin{equation*}
  (1-P)\wt\Phi'(V) = (1-P)[\wt\Phi''(V^S)-\wt\Phi''(\ol V)]V^1
     + (1-P)\wt\Phi''(\ol V)V^1 + (1-P)r(V_1).
\end{equation*}
Therefore,
\begin{equation*}
  \|(1-P)\wt\Phi'(V)\|_{L(Y)} \ge \|(1-P)\wt\Phi''(\ol V)V^1\|_{L(Y)}
  - o(1)\|V^1\|_{L(Y)}-o(\|V^1\|_Y).
\end{equation*}
Here, since Proposition \ref{P8} yields the estimate
\begin{align*}
  \|V^1\|_{L(Y)} &= \|L^{-1}LV^1\|_{L(Y)}
     \le \|L^{-1}\|_{\cl L(L(Y))}\|LV^1\|_{L(Y)}   \\
     &\le \|L^{-1}\|_{\cl L(L(Y))}\|(1-P)\wt\Phi''(\ol V)V^1\|_{L(Y)},
\end{align*}
it follows that
\begin{equation*}
  \|(1-P)\wt\Phi'(V)\|_Y
  \ge \|L^{-1}\|^{-1}\|V^1\|_Y - o(1)\|V^1\|_Y
      - o(\|V^1\|_Y).
\end{equation*}
Hence, \eqref{52} is observed.
\end{proof}

Since $\|\wt\Phi'(V)\|_Y \ge C[\|P\wt\Phi'(V)\|_Y + \|(1-P)\wt\Phi'(V)\|_Y]$, it immediately follows from \eqref{46}, \eqref{47} and \eqref{52} that
\begin{equation}  \label{45}
  \|\wt\Phi'(V)\|_Y \ge \tilde c[\|\wt\Phi'(V^S)\|_Y + \|V^1\|_Y]
  \qquad\text{for all}\enskip V = V^S+V^1 \in \cl U(\ol V),
\end{equation}
with some positive constant $\tilde c$ if $\cl U(\ol V)$ is replaced by a sufficiently smaller one.
\smallskip

\noindent
I) {\sl Gradient Inequality in $L(Y)$.}
Because of
\begin{equation}  \label{43}
  |\wt\Phi(V)-\wt\Phi(\ol V)|
  \le |\wt\Phi(V)-\wt\Phi(V^S)| + |\wt\Phi(V^S)-\wt\Phi(\ol V)|,
\end{equation}
what we have to do is to estimate the two differences in the right hand side. First, let us estimate the former.
\par

On account of Proposition \ref{P7}, we can write as
\begin{align*}
  \wt\Phi(V) - \wt\Phi(V^S)
  &= (\wt\Phi'(V^S),V-V^S) + \tfrac12\big(\wt\Phi''(V^S)(V-V^S),V-V^S\big)
     + o(\|V-V^S\|_Y^2)  \\
  &= (\wt\Phi'(V^S),V^1) + \tfrac12(\wt\Phi''(V^S)V^1,V^1) + o(\|V^1\|_Y^2).
\end{align*}
Therefore,
\begin{align*}
  |\wt\Phi(V)-\wt\Phi(V^S)|
  &\le \|\wt\Phi'(V^S)\|_X\|V^1\|_X + C\|V^1\|_Y\|V^1\|_X + o(\|V^1\|_Y^2) \\
  &\le \|\wt\Phi'(V^S)\|_X^2 + C\|V^1\|_Y^2.
\end{align*}
Hence, \eqref{45} provides the inequality
\begin{equation}  \label{48}
  \|\wt\Phi'(V)\|_Y \ge C|\wt\Phi(V)-\wt\Phi(V^S)|^\frac12,
  \qquad    V=V^S+V^1 \in \cl U(\ol V).
\end{equation}
\smallskip

\noindent
II) {\sl Gradient Inequality on $S$.} Due to Proposition \ref{P10}, the function $\phi(\bs\xi) \equiv \wt\Phi\big(\bs\xi + g(\bs\xi)\big)$ is analytic in a region $\Omega \subset \bb R^N$ containing $\ol{\bs\xi}$. Then, we can apply the classical result due to {\L}ojasiewicz \cite{Lo} to conclude that there exists some exponent $0 < \theta \le \frac12$ for which it holds that
\begin{equation*}
  \|\nabla_{\bs\xi}\phi(\bs\xi)\|_{\bb R^N}
  \ge C|\phi(\bs\xi) - \phi(\ol{\bs\xi})|^{1-\theta},
  \qquad  \bs\xi \in \Omega,
\end{equation*}
if $\Omega$ is replaced by a sufficiently smaller one. Here, it is seen by Proposition \ref{P3} that 
\begin{equation*}
  \nabla_{\bs\xi}\phi(\bs\xi)
  = [(\wt\Phi'\big(\bs\xi+g(\bs\xi)\big),V^0_n+g'(\bs\xi)V^0_n)_X]_{n=1,\ldots,N}.
\end{equation*}
Therefore,
\begin{equation*}
  \|\nabla_{\bs\xi}\phi(\bs\xi)\|_{\bb R^N}
  \le C\|\wt\Phi'(\bs\xi+g(\bs\xi))\|_X = C\|\wt\Phi'(V^S)\|_X.
\end{equation*}
By virtue of \eqref{45}, we obtain that
\begin{equation}  \label{51}
  \|\wt\Phi'(V)\|_Y \ge C\|\wt\Phi'(V^S)\|_Y
  \ge C|\wt\Phi(V^S) - \wt\Phi(\ol V)|^{1-\theta},
  \qquad    V=V^S+V^1 \in \cl U(\ol V),
\end{equation}
if $\cl U(\ol V)$ is replaced by a sufficiently smaller one.
\smallskip

\noindent
III) {\sl Completion of the Proof.} We obtain from \eqref{43}, \eqref{48} and \eqref{51} that
\begin{equation*}
  \|\wt\Phi'(V)\|_Y \ge C|\wt\Phi(V) - \wt\Phi(\ol V)|^{1-\theta},
  \qquad    V \in \cl U(\ol V).
\end{equation*}
\par

Let us ultimately verify \eqref{28}. In view of \eqref{38}, there is a radius $r'''>0$ such that $B^X(\ol V; r''') \cap B^Z(0; R) \subset \cl U(\ol V)$, $R$ being the constant in \eqref{14}. It then follows that
\begin{equation*}
  \|\wt\Phi'(V(t))\|_Y \ge C|\wt\Phi(V(t)) - \wt\Phi(\ol V)|^{1-\theta}
  \quad\text{if}\enskip   V(t) \in B^X(\ol V; r''').
\end{equation*}
Hence, as $\|\wt\Phi'(V(t))\|_Z \ge C\|\wt\Phi'(V(t))\|_Y$ due to \eqref{5} and \eqref{20}, we conclude \eqref{28}.

\bigskip\bigskip

Satoru Iwasaki \par
Department of Information and Physical Sciences  \par
Graduate School of Information Science and Technology  \par
Osaka University  \par
Yamadaoka, Suita, 565-0871  \par
Japan  \par
E-mail: satoru.iwasaki@ist.osaka-u.ac.jp
\bigskip\bigskip

Koichi Osaki  \par
Department of Mathematical Sciences  \par
School of Science and Technology  \par
Kwansei Gakuin University  \par
Gakuen, Sanda, 669-1337  \par
Japan  \par
E-mail: osaki@kwansei.ac.jp
\bigskip\bigskip

Atsushi Yagi  \par
Professor Emeritus  \par
Graduate School of Information Science and Technology  \par
Osaka University  \par
Yamadaoka, Suita, 565-0871  \par
Japan  \par
E-mail: yagi-atsushi-ch@alumni.osaka-u.ac.jp

\end{document}